\documentclass[reqno]{amsart}
\usepackage{xcolor,amssymb}
\usepackage[margin=1in]{geometry}
\usepackage[T1]{fontenc}
\usepackage[utf8]{inputenc}
\usepackage[colorlinks=true, pdfstartview=FitV, linkcolor=blue,
citecolor=blue, urlcolor=blue]{hyperref}
\DeclareMathOperator*{\esssup}{ess\,sup}
\def\loc{\mathrm{loc}}
\DeclareMathOperator*{\diam}{diam}

\DeclareMathOperator*{\data}{data}
\newtheorem{proposition}{Proposition}
\newtheorem{lemma}{Lemma}
\newtheorem{remark}{Remark}
\newtheorem{definition}{Definition}
\newtheorem{theorem}{Theorem}
\newtheorem{corollary}{Corollary}
\numberwithin{equation}{section}

\newcommand{\dd}{\mathrm{d}}

\def\Xint#1{\mathchoice
	{\XXint\displaystyle\textstyle{#1}}%
	{\XXint\textstyle\scriptstyle{#1}}%
	{\XXint\scriptstyle\scriptscriptstyle{#1}}%
	{\XXint\scriptscriptstyle\scriptscriptstyle{#1}}%
	\!\int}
\def\XXint#1#2#3{{\setbox0=\hbox{$#1{#2#3}{\int}$ }
		\vcenter{\hbox{$#2#3$ }}\kern-.6\wd0}}

\def\dashint{\Xint-}

\title{Boundary regularity for quasiminima of double-phase problems on metric spaces}
\author{Antonella Nastasi$^1$, Cintia Pacchiano Camacho$^2$}
\date{\today}

\address{$^1$Department of Engineering, University of Palermo, Viale delle Scienze, 90128, Palermo, Italy\\
 ORCID ID: 0000-0003-1589-2235}
\email{antonella.nastasi@unipa.it}

\address{$^2$Instituto de Matem\'aticas, Unidad Cuernavaca, Universidad Nacional Aut\'onoma de M\'exico, Av. Universidad, 62210, Cuernavaca, Morelos, Mexico\\
ORCID ID: 0009-0004-6210-4013}
\email{cintia.pacchiano@im.unam.mx}

\subjclass[2020]{49Q20, 49N60, 31C45, 35J60, 46E35}

\keywords{Quasiminimizers, double-phase problems, H\"older continuity, boundary regularity, metric measure spaces}

\begin{document}

\maketitle

\makeatletter
\vspace{-2em}
{\centering\enddoc@text}
\let\enddoc@text\empty 
\makeatother

\begin{abstract}
We give a sufficient condition for H\"older continuity at a boundary point for quasiminima of double-phase functionals of $p,q$-Laplace type, in the setting of metric measure spaces equipped with a doubling measure and supporting a Poincaré inequality. We use a purely analytic variational approach based on De Giorgi-type conditions to give a pointwise estimate near a boundary point. The proofs rely on a careful phase analysis and estimates in the intrinsic geometries.
\end{abstract}

\tableofcontents

\section{Introduction}
This paper aims to obtain boundary regularity of quasiminima for the double-phase integral defined on a complete metric measure space \((X, d, \mu)\), equipped with a metric \(d\) and a doubling measure \(\mu\), which supports a weak \((1,p)\)-Poincar\'e inequality. Let \(\Omega\) be an open and bounded subset of \(X\). The double-phase integral is given by
\begin{equation}\label{J}
\int_{\Omega} H(x,g_u) \, \mathrm{d}\mu = \int_{\Omega} \big(g_u^p + a(x) g_u^q\big) \, \mathrm{d}\mu,
\end{equation}
where \(g_u\) is the minimal \(p\)-weak upper gradient of \(u\) and \(1<p<q<Q\), with $Q$ generalization of the euclidean concept of space dimension (see Section \ref{Sec2}). The modulating coefficient function \(a(\cdot) \geq 0\) is assumed to meet certain standard regularity conditions, which are detailed in \eqref{aalpha} below. The integral \eqref{J} shows an energy density that alternates between two types of degenerate behaviour, depending on the modulating coefficient \(a(\cdot)\), which dictates the phase. Specifically, when \(a(x) = 0\), the variational integral \eqref{J} simplifies to the classical \(p\)-growth problem. In contrast, when \(a(x) \geq c > 0\), it corresponds to the \((p,q)\)-problem. Additionally, the ratio of exponents \(q/p\) is subjected to the condition
\begin{equation}\label{pqcond}
\frac{q}{p} \leq 1 + \frac{\alpha}{Q},
\end{equation}
where \(0 < \alpha \leq 1\), and \(Q = \log_2 C_D\), with \(C_D\) being the doubling constant of the measure. This condition is known to be sharp for ensuring regularity of minima already in the Euclidean setting, in this direction see the recent contributions of Baroni-Colombo-Mingione \cite{BCM} and Colombo-Mingione \cite{CM1, CM}. \\

\noindent The main advantage of the notion of quasiminima of \eqref{J} is that it simultaneously covers a wide class of problems 
where the variational integrand $F:\Omega\times\mathbb R\times \mathbb R\to\mathbb R$ satisfies the Carath\'eody conditions and
\[
\lambda H(x,z)\le F(x,u,z)\le\Lambda H(x,z),
\quad 0<\lambda<\Lambda<\infty,
\]
for every $x \in \Omega$ and $u,z\in \mathbb{R}$.
Originally developed by Giaquinta and Giusti \cite{GG1,GG2}, quasiminima offer a flexible framework that extends beyond traditional differentiable structures, thus providing a unified treatment of variational integrals, elliptic equations and systems, obstacle problems and quasiregular mappings. When defined over a metric measure space, quasiminima allow the analysis of variational problems in settings where classical derivatives may not be defined, relying instead on notions such as minimal weak upper gradients to understand how functions vary along paths.
Those notions permit to overcome the difficulties due to working on general
metric spaces, which provide a relaxed geometric setting and lack a smooth structure.
 This adaptability has opened the application of regularity theory to non-Euclidean spaces and various areas of analysis, such as weighted Sobolev spaces, calculus on Riemannian manifolds, and potential theory on graphs, to name a few, see Bj\"orn-Bj\"orn \cite{BB}, Franchi-Haj\l asz-Koskela \cite{FHK}, Haj\l asz \cite{H}, Heinonen-Koskela-Shanmugalingam-Tyson \cite{HKST}, Kilpel\"ainen-Kinnunen-Martio \cite{KKM}, Liu-Zhou-Shanmugalingam \cite{LSZ}, Shanmugalingam \cite{Sh1} and references therein.  
The study of variational problems on metric measure spaces originated from independent proofs by Grigor’yan \cite{GAA} and Saloff-Coste \cite{SC}, which show that, on Riemannian manifolds, the doubling property and the Poincaré inequality are equivalent to a specific Harnack-type inequality for solutions of the heat equation. However, our focus changes from Riemannian manifolds to more general spaces. \\

\noindent Recently, the study of Sobolev spaces without a differentiable structure, along with the variational theory of different energy functionals in metric measure spaces, has captured the interest of researchers. For example, Cheeger \cite{C} studied differentiability properties, while Kinnunen-Martio \cite{KM} focused on aspects of potential theory. Additionally,  Kinnunen-Shanmugalingam \cite{KS} established local properties of quasiminima for the $p$-energy integral. Bj\"{o}rn-Bj\"{o}rn-Shanmugalingam \cite{BBS} explored the Dirichlet problem for $p$-harmonic functions. Furthermore, Kinnunen-Marola-Martio \cite{KMM} proved the Harnack principle. Recently, for the double-phase problems, Kinnunen-Nastasi-Pacchiano Camacho \cite{KNP} established higher integrability results for the gradient of quasiminima and Nastasi-Pacchiano Camacho \cite{PCN2} derived other types of local regularity results.  On the other hand, the regularity theory of nonlinear parabolic problems in the metric space context has been developed and studied in Her\'an \cite{Heran1, Heran2}, Ivert-Marola-Masson \cite{IMM}, Marola-Masson \cite{MM}, Masson-Miranda Jr.-Paronetto-Parviainen \cite{MMPP} and Masson-Siljander \cite{MS}. We also report some recent contributions concearning the study of the fractional $p$-Laplacian in the context of general metric measure spaces, see Capogna-Kline-Korte-Shanmugalingam-Snipes \cite{CKKSS}.\\

\noindent Double-phase integrals have been introduced by Zhikov in the context of Homogenization and Lavrentiev's phenomenon \cite{Z1,Z2,Z3}. Regularity theory for double-phase integrals has been extensively developed, starting from the seminal works of Marcellini \cite{Ma1, Ma2, Ma3}. In the Euclidean context, Colombo-Mingione \cite{CM1,CM}, Cupini-Marcellini-Mascolo \cite{CMM}, De Filippis-Mingione \cite{DFM}, Esposito-Leonetti-Mingione \cite{ELM}, Marcellini-Nastasi-Pacchiano Camacho \cite{MNPC}, Mingione \cite{Min}, Ok \cite{O} considered classes of nonuniformly elliptic problems and proved different regularity properties. For example, H\"{o}lder regularity results for certain class of double-phase problems with non-standard growth conditions can be found in D\"{u}zg\"{u}n-Marcellini-Vespri \cite{DMV1, DMV2}, Eleuteri \cite{Ele}, Harjulehto-H\"{a}st\"{o}-Toivanen \cite{HHT}, Di Benedetto-Gianazza-Vespri \cite{DGV}. Moreover, Di Benedetto-Trudinger \cite{DT} and Baroni-Colombo-Mingione \cite{BCM} independently obtained Harnack inequalities for double-phase problems. See also Kinnunen-Lehrb\"{a}ck-V\"{a}h\"{a}kangas \cite{KLM} and Marcellini \cite{Ma0, Ma} for other results and literature reviews.\\

\noindent However, the analysis of boundary regularity, especially in metric measure spaces, remains relatively underdeveloped. Existing works include boundary regularity results for minimizers and quasiminima in the Euclidean case, such as the Wiener-type condition proved by Ziemer \cite{Ziemer}, see also Tachikawa \cite{Tach} for double-phase functionals and Irving-Koch \cite{IK} for $(p,q)$-growth functionals. For the generalization to the metric setting, we have the works of Bj\"orn \cite{B} and Nastasi-Pacchiano Camacho \cite{NP}. Further contributions include boundary oscillation estimates by Bj\"orn, MacManus, and Shanmugalingam \cite{BMS} for \(p\)-harmonic functions and \(p\)-energy minimizers. Despite these advances, the boundary behaviour of double-phase problems remains a relatively open problem. Addressing this gap, the present work studies quasiminima of \eqref{J} in the Newtonian--Sobolev space \(N^{1,1}(X)\), with \(H(\cdot,g_u) \in L^1(X)\), for $X$ a general metric measure space. The lack of classical derivatives in metric measure spaces requires variational methods that focus on the modulus of the gradient, therefore avoiding the need for smoothness.\\

\noindent To the best of our knowledge, boundary regularity for double-phase problems remains largely unexplored, even in the Euclidean setting. Motivated by recent developments, such as the works of Tachikawa \cite{Tach} and Bj\"orn \cite{B}, we aim to extend regularity results up to the boundary within the general framework of metric measure spaces, where no smooth structure is assumed. A central aspect of our approach is the analysis of functionals with non-standard growth, particularly the transition between the \(p\)- and \((p,q)\)-phases governed by the coefficient function \(a(x)\). A key novelty of this work is the introduction of a Maz'ya-type estimate for frozen functionals (Theorem~\ref{Mazya_Frozen}), inspired by classical pointwise capacitary estimates due to Maz'ya \cite{Mazya1, Mazya2}. As far as we know, such a result has not been previously established, even in the Euclidean case. This estimate enables us to generalize local regularity results, such as those in De Filippis–Mingione \cite{DFM}, to boundary points.\\

\noindent The present work is organized as follows: we first prove the local boundedness of quasiminima (Section~\ref{local boundedness}), then examine the behavior of frozen functionals (Section~\ref{frozen}), and finally derive a pointwise boundary estimate (Proposition~\ref{pointwise-estimate}, Section~\ref{Pointwise estimate on a boundary point}). This leads to the main result of the paper: a sufficient condition for the H\"older continuity of quasiminima at boundary points (Theorem~\ref{Theorem2.12}, Section~\ref{Sufficient condition for Holder continuity}). Throughout, we aim to provide detailed and self-contained arguments that contribute to the regularity theory of variational integrals under double-phase growth.

\section{Preliminaries}\label{Sec2}
This section collects definitions and results needed when working in the general context of metric measure spaces. Further details can be found in the book by Bj\"orn-Bj\"orn \cite{BB}.
Let $(X, d, \mu)$ be a complete metric measure space with a metric $d$ and a Borel regular measure $\mu$. 
The measure $\mu$ is supposed to be doubling, meaning that there exists a constant $C_D \geq 1$ such that, for every ball $B_r$ in $X$,
\begin{equation}\label{doubling}
0<\mu(B_{2r})\leq C_D \mu(B_r)<\infty.
\end{equation} 
We denote by $B_r=B(x,r)=\{x\in X:d(y,x)<r\}$ an open ball centered in $x\in X$ and with radius $0<r<\infty$. 
The following lemma introduces $Q$, which is a generalization of the usual concept of dimension in metric spaces supporting a doubling measure. Indeed, in the Euclidean $n$-space with the Lebesgue measure, we have $Q=n$.

\begin{lemma}[\cite{BB}, Lemma 3.3]\label{ineqDoubling}
	Let $(X, d, \mu)$ be a metric measure space with a doubling measure $\mu$. Then 
	\begin{equation}\label{s}
		\frac{\mu(B(y,r))}{\mu(B(x,R))}\geq C\left(\frac{r}{R}\right)^Q,
	\end{equation}
	for every $0<r\le R<\infty$, $x \in X$ and $y \in B(x,R)$. Here $Q=\log_2C_D$ and $C=C_D^{-2}$.
\end{lemma}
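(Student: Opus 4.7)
The plan is to combine a single use of the triangle inequality with an iteration of the doubling condition \eqref{doubling}. Since $y\in B(x,R)$ we have $d(x,y)<R$, so by the triangle inequality $B(x,R)\subseteq B(y,2R)$ and hence $\mu(B(x,R))\le \mu(B(y,2R))$. The task then reduces to comparing $\mu(B(y,2R))$ with $\mu(B(y,r))$ using only the doubling constant $C_D$.

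To carry this out, I would pick the smallest non-negative integer $k$ with $2^k r\ge 2R$. Since $r\le R$, this choice satisfies $k\le 2+\log_2(R/r)$. Iterating \eqref{doubling} $k$ times at the center $y$ gives
\[
\mu(B(y,2R)) \le \mu(B(y,2^k r)) \le C_D^{\,k}\,\mu(B(y,r)),
\]
while the bound on $k$ yields $C_D^{\,k}\le C_D^{\,2+\log_2(R/r)} = C_D^{\,2}\,(R/r)^{\log_2 C_D}$. Combining these with the inclusion from the first step produces
\[
\mu(B(x,R)) \le C_D^{\,2}\,\bigl(R/r\bigr)^{Q}\mu(B(y,r))
\]
with $Q=\log_2 C_D$, and rearranging gives \eqref{s} with constant $C=C_D^{-2}$.

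There is no serious obstacle in this argument; the only subtle point is that $2R/r$ is generally not a power of two, which is precisely the reason one must take the ceiling in the choice of $k$ and therefore pick up the factor $C_D^{\,2}$ rather than $C_D$ in the final constant. Positivity and finiteness of the measure of every ball, which come built into the doubling hypothesis \eqref{doubling}, ensure that the ratio in \eqref{s} is well defined at every stage of the argument.
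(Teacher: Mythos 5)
Your argument is correct and is the standard proof of this doubling estimate; the paper itself does not reprove it but simply cites it from Björn--Björn \cite[Lemma 3.3]{BB}, and the argument given there is the same inclusion-plus-iteration scheme you use, with the same constant $C_D^{-2}$ arising from the one triangle-inequality step and the rounding-up in the choice of $k$.
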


\noindent Let $a:X\to[0,\infty)$ be the coefficient function in \eqref{J} and let $\delta_{\mu}$ be a quasi-distance defined by 
\[
\delta_{\mu}(x,y)=\bigl(\mu(B(x,d(x,y)))+\mu(B(y,d(x,y)))\bigr)^{1/Q},\quad x,y \in X,\, x\ne y,
\]
with $Q=\log_2C_D$ as in \eqref{s} and $\delta_{\mu}(x,x)=0$. We suppose that there exists $0<\alpha\le1$ such that
\begin{equation}\label{aalpha}
	[a]_{\alpha}= \sup_{x,y \in X, x\neq y} \dfrac{|a(x)-a(y)|}{\delta_{\mu}(x,y)^{\alpha}}<\infty.
\end{equation} 
In the next remark, we point out that under certain regularity conditions on the measure, the quasi-distance $\delta_{\mu}$ and the usual distance $d$ are equivalent.

\begin{remark}
A measure is called Ahlfors--David regular, if there exist constants $0<C_2\le C_1<\infty$ such that
\begin{equation*}\label{ahlfors}
C_2r^Q\le\mu(B(x,r))\le C_1r^Q,
\end{equation*}
for every $x\in X$ and $0<r\le\diam(X)$.
If the measure $\mu$ is Ahlfors--David regular, then $\delta_{\mu}(x,y)\approx d(x,y)$ for every $x,y$ and, consequently,
$[a]_\alpha<\infty$ if and only if $a$ is H\"older continuous with the exponent $\alpha$.
\end{remark}

\noindent Throughout this manuscript, we suppose that $\mu$ is upper $Q$-Ahlfors regular, that is there exists a constant $C_1>0$ such that our measure satisfies the following inequality
\begin{equation}\label{upper Q-Ahlfors}
\mu(B(x,r))\leq C_1 r^Q \quad \mbox{for every $x\in X$ and $0<r\leq {\rm diam(X)}.$}
\end{equation}
This assumption ensures some uniformity and regularity in the distribution of the measure. We note that this is a general assumption. For example, self-similar fractals, metric measure spaces with controlled curvature, uniformly rectifiable sets and Carnot groups can exhibit upper Ahlfors regularity. \\

\noindent We discuss the notion of upper gradient as a way to generalize the modulus of the gradient in the Euclidean case to the metric setting. 

\begin{definition}\label{uppergradient}
	A nonnegative Borel function $g$ is said to be an upper gradient of function $u: X \to [-\infty,\infty]$ if, for all paths $\gamma$ connecting $x$ and $y$, we have 
	\begin{equation*}
		|u(x)-u(y)|\leq \int_{\gamma}g\, \dd s, 
	\end{equation*}
	whenever $u(x)$ and $u(y)$ are both finite and $\int_{\gamma}g \, \dd s= \infty$ otherwise. Here $x$ and $y$ are the endpoints of $\gamma$.
	Moreover, if a nonnegative measurable function $g$ satisfies the inequality above for $p$-almost every path, 
	that is, with the exception of a path family of zero $p$-modulus, then $g$ is called a $p$-weak upper gradient of $u$.
\end{definition}
\noindent We note that if $u$ has an upper gradient $g\in L^p(X)$, there exists a unique minimal $p$-weak upper gradient $g_u\in L^p(X)$ with
$g_u\le g$ $\mu$-almost everywhere for all $p$-weak upper gradients $g\in L^p(X)$ of $u$, see  \cite[Theorem 2.5]{BB}.\\

\noindent Let $\Vert u\Vert_{N^{1,p}(X)}=\Vert u\Vert_{L^{p}(X)}+\Vert g_u\Vert_{L^{p}(X)}$.
Consider the collection of functions $u\in L^p(X)$ with an upper gradient $g\in L^p(X)$ and let
\begin{equation*}
	\widetilde{N}^{1,p}(X)
	=\lbrace u:\Vert u\Vert_{N^{1,p}(X)}<\infty\rbrace.
\end{equation*}
The Newtonian space is defined by
\[
N^{1,p}(X)=\lbrace u:\Vert u\Vert_{N^{1,p}(X)}<\infty\rbrace/\sim,
\]
where $u\sim v$ if and only if $\Vert u-v\Vert_{N^{1,p}(X)}=0$.\\

\noindent The corresponding local Newtonian space is defined by $u\in N^{1,p}_{\loc}(X)$ if
$u\in N^{1,p}(\Omega')$ for all $\Omega'\Subset X$, see \cite[Proposition 2.29]{BB},
where $\Omega'\Subset X$ means that $\overline{\Omega'}$ is a compact subset of $X$.\\

\noindent Let $\Omega$ be an open subset of $X$. We define $N^{1,p}_0(\Omega)$ as the set of functions $u\in N^{1,p}(X)$ that are zero on $X\setminus\Omega$ $\mu$-a.e. The space $N_0^{1,p}(\Omega)$ is equipped with the norm $\Vert\cdot\Vert_{N^{1,p}}$. Note also that if $\mu(X\setminus\Omega) = 0$, then $N^{1,p}_0(\Omega)=N^{1,p}(X)$. We shall therefore always assume that $\mu(X \setminus \Omega) > 0$.\\

\noindent We assume that $X$ supports the following Poincar\'{e} inequality.

\begin{definition}
Let $1\le p<\infty$. 
A metric measure space $(X,d,\mu)$ supports a weak $(1, p)$-Poincar\'{e} inequality if there exist a constant $C_{PI}$ and a dilation factor $\lambda \geq 1 $ such that 
\[
	\dashint_{B_r} |u-u_{B_r}|\, \dd\mu
	\leq C_{PI} r \left(\dashint_{B_{\lambda r}}g_u^p \, \dd\mu\right)^{\frac{1}{p}},
\]
for every ball $B_r$ in $X$ and for every $u\in L^1_{\loc}(X)$, where we denote the integral average with
\[
u_{B_r}=\dashint_{B_r} u\, \dd\mu
=\frac{1}{\mu(B_r)}\int_{B_r}u\,\dd\mu.
\]
\end{definition}

\noindent As shown in \cite[ Theorem 1.0.1 ]{KZ} by Keith and Zhong, see also \cite[Theorem 4.30]{BB}, the Poincar\'{e} inequality is a self-improving property.

\begin{theorem}\label{kz}
	Let $(X, d, \mu)$ be a complete metric measure space with a doubling measure $\mu$ and a weak $(1,p)$-Poincar\'{e} inequality with $p>1$.
	Then there exists $\varepsilon>0$ such that $X$ supports a weak $(1, s)$-Poincar\'{e} inequality for every $s>p-\varepsilon$. 
	Here, $\varepsilon$ and the constants associated with the $(1, s)$-Poincar\'e inequality depend only on $C_D$, $C_{PI}$ and $p$.
\end{theorem}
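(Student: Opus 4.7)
The plan is to follow the strategy of Keith and Zhong. The crux is that, unlike Gehring-type self-improvement at higher exponents (which is relatively soft), improving the exponent of a Poincaré inequality \emph{downward} is delicate and makes full use of both the completeness of $X$ and the doubling of $\mu$. The first step is to reformulate the weak $(1,p)$-Poincaré inequality as an equivalent pointwise inequality. Using the doubling property together with a standard telescoping argument with balls of geometrically decreasing radii, one shows that the Poincaré hypothesis is equivalent to the existence of a constant $C>0$ such that
\begin{equation*}
|u(x)-u(y)|\le C\, d(x,y)\Bigl(\mathcal{M}_{r}(g_u^{p})(x)^{1/p}+\mathcal{M}_{r}(g_u^{p})(y)^{1/p}\Bigr),\qquad r=2\lambda d(x,y),
\end{equation*}
for $\mu$-almost every $x,y\in X$, where $\mathcal{M}_{r}$ denotes the restricted centered Hardy--Littlewood maximal operator at scale $r$. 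This pointwise form is the starting object for the self-improvement argument.

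Next, I would exploit the completeness of $X$. Together with the weak $(1,p)$-Poincaré inequality it forces $X$ to be \emph{quasiconvex}: any two points can be joined by a rectifiable curve whose length is bounded by a constant multiple of their distance. Quasiconvexity supplies a robust chaining mechanism between balls that does not rely on any linear or differentiable structure, and is essential for extracting the fine-scale information needed to decrease the Poincaré exponent.

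The main technical step is then a reverse Hölder inequality for $g_u$. Via a Calderón--Zygmund / Whitney-type stopping-time decomposition of a super-level set $\{\mathcal{M}(g_u^{p})>t\}$, combined with the truncation identity $g_{\max(u-t,0)}=g_{u}\chi_{\{u>t\}}$ for minimal $p$-weak upper gradients, and with the chaining of balls provided by quasiconvexity, one derives an estimate of the form
\begin{equation*}
\left(\dashint_{B}g_u^{p}\,\dd\mu\right)^{1/p}\le C\left(\dashint_{\sigma B}g_u^{s}\,\dd\mu\right)^{1/s}
\end{equation*}
for some $s<p$ and some dilation $\sigma\ge 1$, provided $B$ is chosen relative to the stopping time. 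Summing the resulting local inequalities and substituting back into the pointwise formulation yields the weak $(1,s)$-Poincaré inequality on $X$.

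The hard part will be the stopping-time / truncation analysis. In the metric setting one has no smooth cutoff functions, so every cutoff must be realised through level sets of $u$ itself, and the error terms generated by chaining across scales have to be summed uniformly in the scale parameter; a delicate balance of the Whitney radii against the modulus of the maximal function is required. Tracking the dependence of constants through this argument reveals that the admissible exponents form an interval $(p-\varepsilon,\infty)$ with $\varepsilon=\varepsilon(C_D,C_{PI},p)>0$, which is exactly the conclusion of the theorem.
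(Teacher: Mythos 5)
The paper does not actually prove Theorem~\ref{kz}: it is quoted directly from Keith--Zhong \cite{KZ} (see also \cite[Theorem 4.30]{BB}) and used as a black box. So there is no ``paper's proof'' to compare against; what you have written is a reconstruction of the Keith--Zhong argument, and it should be assessed on those terms.

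Your outline is in the right spirit in its opening moves. The reformulation of the weak $(1,p)$-Poincar\'e inequality as the pointwise Haj\l asz-type estimate
\[
|u(x)-u(y)|\le C\,d(x,y)\bigl(\mathcal{M}_{r}(g_u^{p})(x)^{1/p}+\mathcal{M}_{r}(g_u^{p})(y)^{1/p}\bigr),\qquad r\approx d(x,y),
\]
is indeed the standard starting point, and it is also true that completeness plus doubling plus Poincar\'e forces quasiconvexity, which supplies the chaining needed downstream. But the heart of your argument --- the claim that a Calder\'on--Zygmund/Whitney stopping-time decomposition together with the truncation identity $g_{(u-t)_+}=g_u\chi_{\{u>t\}}$ produces a reverse H\"older inequality $\bigl(\dashint_{B}g_u^{p}\bigr)^{1/p}\le C\bigl(\dashint_{\sigma B}g_u^{s}\bigr)^{1/s}$ for $s<p$ --- is the step where the proposal becomes a gap rather than a proof. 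Such a reverse H\"older cannot hold for the upper gradient of an arbitrary $u$ on arbitrary balls (a thin spike in $g_u$ defeats it), and you acknowledge this by restricting to ``balls chosen relative to the stopping time''; but then it is no longer clear, and you do not explain, how to recover the full $(1,s)$-Poincar\'e inequality from an estimate valid only on that restricted family, nor how the error terms from the chaining close uniformly. What Keith and Zhong actually do at this stage is different in a structurally important way: they first reduce to Lipschitz $u$ with pointwise Lipschitz constant $\mathrm{lip}\,u$ as the gradient, and then construct auxiliary Lipschitz test functions adapted to the super-level sets of the restricted maximal function; the good-$\lambda$ estimate is proved for a quantity built from these auxiliary functions, not for $g_u$ directly. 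That construction is the genuinely new and non-obvious ingredient, and it is precisely the piece that your sketch compresses into the single phrase ``a delicate balance of the Whitney radii against the modulus of the maximal function.'' As written, the proposal identifies the right scaffolding but does not supply the mechanism that makes the self-improvement actually happen, so it should be regarded as an outline of the known proof rather than a self-contained one.
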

\noindent From now on and without further notice, we fix $s=s(C_{PI},C_D,p,q)$ such that $1 < s < p<q<s^*$ and for which $X$ also admits a weak $(1, s)$-Poincar\'{e} inequality. Such $s$ is given by Theorem \ref{kz} and will be used in various of our results.\\

\noindent The following result shows that the Poincar\'e inequality implies a Sobolev--Poincar\'e inequality, 
see \cite[Theorem 4.21 and Corollary 4.26]{BB}.

\begin{theorem}\label{sstars}
	Assume that $\mu$ is a doubling measure and $X$ supports a weak $(1,p)$-Poincar\'{e} inequality and let $Q=\log_2C_D$ be as in \eqref{s}.  
	Let $1\le p^*\le\frac{Qp}{Q-p}$ for $1\le p<Q$ and $1\leq p^*<\infty$ for $Q\leq p<\infty$.
	Then $X$ supports a weak $(p^*,p)$-Poincar\'{e} inequality,
	that is, there exist a constant $C=C(C_D,C_{PI},p)$ such that
	\[
		\left(\dashint_{B_r} |u-u_{B_r}|^{p^*} \,\dd\mu\right)^{\frac{1}{p^*}}
		\leq Cr\left(\dashint_{B_{2\lambda r}}g_u^p \,\dd\mu\right)^{\frac{1}{p}},
	\]
	for every ball $B_r$ in $X$ and every $u \in L^1_{\loc}(X)$. 
\end{theorem}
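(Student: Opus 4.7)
The plan is to follow a Hajłasz--Koskela-type chaining argument: first derive a pointwise estimate for $|u(x) - u_{B_r}|$ in terms of a restricted Hardy--Littlewood maximal function of $g_u^p$, and then integrate to obtain the fractional Sobolev--Poincaré bound via the $L^p \to L^{p^*}$ mapping properties of that maximal operator.

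For the pointwise step, fix $B = B(x_0, r)$ and a Lebesgue point $x \in B$. Consider the shrinking chain $B_j = B(x, 2^{1-j} r)$ and use Lebesgue differentiation to write $u(x) = \lim_j u_{B_j}$. Telescoping and comparing consecutive averages through the doubling condition \eqref{doubling} gives
\[
|u(x) - u_{B_0}| \leq C \sum_{j=0}^\infty \dashint_{B_j} |u - u_{B_j}|\, \dd\mu.
\]
Applying the weak $(1,p)$-Poincaré inequality to each $B_j$ produces the geometric bound $\dashint_{B_j}|u - u_{B_j}|\,\dd\mu \leq C\,2^{-j}r\bigl(\dashint_{\lambda B_j} g_u^p\,\dd\mu\bigr)^{1/p}$, and summing in $j$ yields
\[
|u(x) - u_{B_0}| \leq C r \bigl(\mathcal{M}_{2\lambda r} g_u^p(x)\bigr)^{1/p},
\]
where $\mathcal{M}_R f(x) = \sup_{0 < t \leq R} \dashint_{B(x,t)} |f|\, \dd\mu$ is the restricted maximal operator. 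A further doubling comparison replaces $u_{B_0}$ by $u_B$ at bounded cost.

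To pass to the integral form, raise to the $p^*$-th power and integrate over $B$. For $1 \leq p < Q$ one invokes the lower bound of Lemma \ref{ineqDoubling}, which forces $\mu(B(y,t))$ to grow at least like $t^Q$, in order to deduce a fractional-maximal or Riesz-potential bound whose exponent is precisely $p^* = Qp/(Q-p)$; this is the only step where the dimensional exponent $Q$ enters critically. For $Q \leq p < \infty$ the same pointwise estimate immediately gives any finite $p^*$, since the restricted maximal operator is bounded on $L^p$ with constants that absorb the missing decay. The main obstacle is obtaining the sharp subcritical exponent: one must extract the polynomial lower bound \eqref{s} from every ball in the chain simultaneously, and keep careful bookkeeping of the dilation factor $\lambda$ so that only the single enlarged ball $B_{2\lambda r}$ appears on the right-hand side of the final inequality. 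Once the pointwise estimate is in hand, everything else is a measure-theoretic rearrangement.
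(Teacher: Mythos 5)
The paper does not actually prove this theorem; it quotes it from Bj\"orn--Bj\"orn \cite{BB} (Theorem 4.21 and Corollary 4.26). So the comparison is really with the standard chaining proof in the literature, which your outline is clearly modeled on — but there is a genuine gap in the crucial step.

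Your pointwise estimate $|u(x)-u_{B}|\le Cr\bigl(\mathcal M_{2\lambda r} g_u^p(x)\bigr)^{1/p}$ is correct, but it has already discarded exactly the structure needed to gain integrability. Raise it to the $p^*$-th power, integrate over $B$, and apply $L^{p^*/p}$-boundedness of the restricted maximal operator (valid since $p^*/p>1$): you obtain
\[
\left(\dashint_{B}|u-u_B|^{p^*}\,\dd\mu\right)^{1/p^*}\le Cr\left(\dashint_{B_{2\lambda r}} g_u^{p^*}\,\dd\mu\right)^{1/p^*},
\]
i.e.\ a weak $(p^*,p^*)$-Poincar\'e inequality. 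That is \emph{not} the statement; the right-hand side must involve $g_u^p$, not $g_u^{p^*}$. Collapsing the geometric sum $\sum_j 2^{-j}r\bigl(\dashint_{\lambda B_j}g_u^p\bigr)^{1/p}$ to a single maximal function is lossless for boundedness but lossy for the Sobolev gain: the summands at different scales $j$ carry different weights, and it is precisely the interplay of those weights with the lower volume bound \eqref{s} that produces the exponent $p^*=Qp/(Q-p)$.

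What is missing is a Hedberg-type splitting of the chain sum (or, equivalently, proof of a weak-type $(p^*,p)$ estimate followed by the Maz'ya truncation trick). Concretely: keep the discrete Riesz-potential sum, choose a cutoff level $j_0$, bound the tail $j>j_0$ by $2^{-j_0}r\bigl(\mathcal M_{2\lambda r}g_u^p(x)\bigr)^{1/p}$, bound the head $j\le j_0$ by pulling the integral out to the full ball and invoking $\mu(\lambda B_j)\gtrsim 2^{-jQ}\mu(\lambda B_0)$ from Lemma~\ref{ineqDoubling} (this is where $Q$ and the condition $p<Q$ enter), which yields a factor $2^{j_0(Q-p)/p}$ times $r\bigl(\dashint_{B_{2\lambda r}}g_u^p\bigr)^{1/p}$, and then optimize over $j_0$. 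This produces the pointwise bound
\[
|u(x)-u_B|\le C\,r\,\bigl(\mathcal M_{2\lambda r}g_u^p(x)\bigr)^{\frac{1}{p}-\frac1Q}\left(\dashint_{B_{2\lambda r}}g_u^p\,\dd\mu\right)^{\frac1Q},
\]
whose $p^*$-th power integrates, via the weak $(1,1)$ bound for $\mathcal M$ and a truncation or interpolation argument, to the claimed $(p^*,p)$-inequality. You gesture at this in the sentence about ``a fractional-maximal or Riesz-potential bound,'' but the proposal as written bounds by the ordinary maximal function first and thus cannot recover the sharp exponent; the splitting must be done \emph{before} discarding the chain structure, not after.
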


\begin{corollary}[\cite{BB}, Corollary 4.26]
\label{coropoinca}
If $X$ supports a weak $(1,p)$-Poincar\'{e} inequality and $Q$ in (\ref{s}) satisfies $Q\leq p$, then $X$ supports a weak $(t,p)$-Poincar\'{e} inequality for all $1\leq t<\infty$.
\end{corollary}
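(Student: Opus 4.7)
The corollary is essentially a repackaging of the second case of Theorem \ref{sstars}, so my first move is to extract it directly from that theorem. Under the hypothesis $Q\leq p$, Theorem \ref{sstars} supplies, for every $1\le p^*<\infty$, a constant $C=C(C_D,C_{PI},p)$ with
\begin{equation*}
\left(\dashint_{B_r}|u-u_{B_r}|^{p^*}\,\dd\mu\right)^{1/p^*}\le Cr\left(\dashint_{B_{2\lambda r}}g_u^p\,\dd\mu\right)^{1/p}
\end{equation*}
for every ball $B_r\subset X$ and every $u\in L^1_{\loc}(X)$. Choosing $p^*=t$ for any desired $t\in[1,\infty)$ yields exactly the claimed weak $(t,p)$-Poincaré inequality, with the dilation factor $2\lambda$ inherited from Theorem \ref{sstars}.

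If I wanted a self-contained proof that does not invoke the full strength of Theorem \ref{sstars}, the natural route is the standard telescoping chain-ball estimate. I would fix a ball $B_r$, pick a Lebesgue point $x\in B_r$, and write
\begin{equation*}
u(x)-u_{B_r}=\sum_{i=0}^{\infty}\bigl(u_{B(x,r2^{-i-1})}-u_{B(x,r2^{-i})}\bigr),
\end{equation*}
estimating each term by the weak $(1,p)$-Poincaré inequality and then summing. Combined with the doubling estimate \eqref{s} (from Lemma \ref{ineqDoubling}), this produces a pointwise bound of $|u(x)-u_{B_r}|$ by a fractional-maximal-type quantity involving $g_u^p$. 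When $Q<p$ the geometric series converges to an $L^\infty$ bound (Morrey-type embedding), from which any $L^t$ estimate on the finite-measure ball follows immediately by Hölder. When $Q<p$ strictly, Hölder then upgrades the $L^\infty$ bound to $L^t$ with the correct $r$-scaling.

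The main obstacle lies in the borderline case $Q=p$. Here the naive telescoping produces a logarithmic divergence in the geometric series, and one cannot conclude directly with an $L^\infty$ bound. The standard fix is a Trudinger-type exponential integrability estimate for $u-u_{B_r}$: one shows that $u$ is locally exponentially integrable with the correct scaling of $\|g_u\|_{L^p(B_{2\lambda r})}$, and then obtains every $L^t$ bound by expanding the exponential into its Taylor series and applying Hölder. This requires a careful John-Nirenberg-type iteration on nested chains of balls, controlled by the doubling constant; this is the delicate step, and it is where the restriction $t<\infty$ is essential, since exponential integrability is strictly weaker than $L^\infty$.
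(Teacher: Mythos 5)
The paper states this corollary as a direct citation to [BB, Corollary 4.26] without supplying a proof, and your first paragraph — reading it off as the $Q\leq p$ case of Theorem \ref{sstars} with $p^*=t$ — is exactly the intended and immediate deduction. Your second, self-contained sketch correctly outlines how the underlying result is actually proved in the reference (telescoping chain estimates when $Q<p$, Trudinger-type exponential integrability at the borderline $Q=p$), but it is superfluous once Theorem \ref{sstars} is available; the only minor slip is that passing from $L^\infty$ to $L^t$ via Hölder is a downgrade rather than an upgrade, a wording issue with no mathematical consequence.
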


\begin{remark}\label{rem1poin} By the H\"older inequality we see that a weak $(p^*,p)$-Poincar\'{e} inequality implies the same inequality for smaller values of $p^*$. Meaning that $X$ will then support a weak $(t,p)$-Poincar\'{e} inequality for all $1<t<p^*$.
\end{remark}

\begin{remark}\label{rem2poin} 
The exponent $Q$ in (\ref{s}) is not uniquely determined. In particular, since $\rho < R$, one can always choose a larger value for $Q$. Therefore, the assumption $Q > p$ in Theorem \ref{sstars} can always be satisfied. 
\end{remark}

\begin{definition}[\cite{B}, Definition 6.13]
    Let $E\subset \Omega$. Then we define the variational capacity
    $$
    \textrm{cap}_p(E,\Omega)=\inf_{u}\int_{\Omega} g_u^p\dd\mu,
    $$
    where the infimum is taken over all $u\in N^{1,p}_0(\Omega)$ such that $u\geq 1$ on E.
\end{definition}
\noindent The variational capacity is also known as the relative capacity. We define the infimum to be $\infty$ in the absence of any admissible functions $u$.\\

\noindent The next result provides explicit estimates for the variational capacity, especially in the case of balls. A detailed proof is available in \cite{BB}.

\begin{proposition}\label{Prop6.16BjornBjorn} Assume that $\mu$ is a doubling measure with constant $C_D$ and $X$ supports a weak $(1,p)$-Poincar\'e inequality. Then there exists $C>0$ such that if $E\subset B_r=B(x_0,r)$ with $0<r<\frac{\textrm{diam}(X)}{8}$, then
$$
\frac{\mu(E)}{Cr^p}\leq\textrm{cap}_p(E,B_{2r})\leq\frac{C_D\mu(B_r)}{r^p}.
$$
\end{proposition}

\begin{remark}\label{REMARKcapacities} Notice that for $s_2\leq s_1$ and a small radius $r\leq 1$, we have that if $u\in N^{1,s_1}_0(B_r)$ is such that $u\geq 1$ on $E$, it is then clear that $u\in N^{1,s_2}_0(B_r)$ with $u\geq 1$ on $E$. Therefore by H\"older's inequality we obtain
\begin{align*}
    \textrm{cap}_{s_2}(E,B_r)&\leq \int_{B_r}g_u^{s_2}\,\dd\mu\\
    &=\left(\int_{B_r}g_u^{s_1}\,\dd\mu\right)^{\frac{s_2}{s_1}}\mu(B_r)^{\frac{s_1-s_2}{s_1}}.
\end{align*}
So,
$$
\mu(B_r)^{\frac{s_2-s_1}{s_1}}\textrm{cap}_{s_2}(E,B_r)\leq \left(\int_{B_r}g_u^{s_1}\,\dd\mu\right)^{\frac{s_2}{s_1}}.
$$
Therefore,
$$
\left(\mu(B_r)^{\frac{s_2-s_1}{s_1}}\textrm{cap}_{s_2}(E,B_r)\right)^{\frac{s_1}{s_2}}\leq \int_{B_r}g_u^{s_1}\,\dd\mu,\qquad\textrm{for all }u\in N^{1,s_2}_0(B_r)\textrm{ with }u\geq 1\textrm{ on }E.
$$
Meaning,
\begin{equation*}
    \mu(B_r)^{\frac{s_2-s_1}{s_1}}\textrm{cap}_{s_2}(E,B_r)\leq \textrm{cap}_{s_1}^{s_2/s_1}(E,B_r),\qquad\textrm{for }s_2\leq s_1.
\end{equation*}
Furthermore, we have
$$
\frac{1}{\textrm{cap}_{s_1}(E,B_r)}\leq\left(\mu(B_r)^{\frac{s_1-s_2}{s_1}}\frac{1}{\textrm{cap}_{s_2}(E,B_r)}\right)^{\frac{s_1}{s_2}}.
$$
Finally, since we are asking for the measure $\mu$ to be upper $Q$-Alhfors regular, by inequality \eqref{upper Q-Ahlfors} and the assumption $r\leq 1$, we conclude
\begin{equation}\label{eq2.5NEW}
\frac{1}{\textrm{cap}_{s_1}(E,B_r)}\leq\frac{C}{\textrm{cap}_{s_2}^{s_1/s_2}(E,B_r)},    
\end{equation}
where $C=C(C_1, s_1,s_2)$.
\end{remark}

\noindent The following proposition is a capacity version of the Sobolev-Poincar\'e inequality, also referred to as Maz'ya type estimate. The proof is a straightforward generalization of the Euclidean case and can be found in \cite{B}.

\begin{proposition}[\cite{B}, Proposition 3.2]\label{MazyaEstimate}Let $X$ be a doubling metric measure space supporting a weak $(1,s)$-Poincar\'e inequality. Then there exists $C$ and $\lambda\geq 1$ such that for all balls $B_r$ in $X$, $u\in N^{1,s}(X)$ and $S=\lbrace x\in B_{\frac{r}{2}}: u(x)=0\rbrace$, then
\begin{equation}
\left(\dashint_{B_r}\vert u\vert^t \, \dd\mu\right)^{\frac{1}{t}}\leq\left(\frac{C}{{\rm cap}_s(S,B_r)}\int_{B_{\lambda r}}g^s \, \dd\mu\right)^{\frac{1}{s}},
\end{equation}
for $C$ depending on $C_{PI}$ and $t$ as in Corollary \ref{coropoinca}.
\end{proposition}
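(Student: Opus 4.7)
The plan is to split $u$ about its $B_r$-average. Setting $c = u_{B_r}$, Minkowski's inequality yields
\[
\left(\dashint_{B_r}|u|^t\,\dd\mu\right)^{\frac{1}{t}} \leq \left(\dashint_{B_r}|u-c|^t\,\dd\mu\right)^{\frac{1}{t}} + |c|,
\]
and the first summand is controlled directly by the Sobolev--Poincar\'e inequality (Theorem \ref{sstars}, whose admissible range of $t$ comes from Corollary \ref{coropoinca} and Remark \ref{rem1poin}). To convert this term into the desired capacity form it suffices to observe the elementary bound ${\rm cap}_s(S,B_r)\leq C\mu(B_r)/r^s$, obtained by testing the capacity against a Lipschitz cutoff $\eta$ of $B_{r/2}$ in $B_r$ with $g_\eta\leq 4/r$ and using doubling.

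The heart of the argument is to control $|c|$ by ${\rm cap}_s(S,B_r)$. Replacing $u$ by $-u$ if necessary (which leaves $S$ unchanged), assume $c\geq 0$; the case $c=0$ being trivial, suppose $c>0$ and, with $\eta$ as above, define the admissible test function
\[
\phi = \eta\cdot \min\!\left(1,\tfrac{(c-u)_+}{c}\right),
\]
so that $\phi\equiv 1$ on $S$ (since $u\equiv 0$ there and $S\subset B_{r/2}$) while $\phi$ vanishes outside $B_r$. By the Leibniz and chain rules for minimal $s$-weak upper gradients,
\[
g_\phi \leq g_\eta\, v + \tfrac{\eta}{c}\, g\, \chi_{\{0<u<c\}}, \qquad v:=\min\!\left(1,\tfrac{(c-u)_+}{c}\right)\leq \tfrac{|u-c|}{c}.
\]

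Using $g_\eta\leq 4/r$ together with the weak $(s,s)$-Poincar\'e inequality (a consequence of Theorem \ref{sstars} at exponent $s$ via Remark \ref{rem1poin}), one readily obtains
\[
\int_{B_r}(g_\eta v)^s\,\dd\mu \leq \frac{C}{r^s c^s}\int_{B_r}|u-c|^s\,\dd\mu \leq \frac{C}{c^s}\int_{B_{\lambda r}}g^s\,\dd\mu,
\]
while the second contribution to $g_\phi^s$ is trivially bounded by $c^{-s}\int_{B_{\lambda r}} g^s\,\dd\mu$. These combine to give ${\rm cap}_s(S,B_r)\leq Cc^{-s}\int_{B_{\lambda r}}g^s\,\dd\mu$, i.e.\ $|c|\leq \bigl(\tfrac{C}{{\rm cap}_s(S,B_r)}\int_{B_{\lambda r}}g^s\,\dd\mu\bigr)^{1/s}$, and assembling with the first paragraph proves the claim. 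The main technical point I expect to need care with is the upper-gradient bookkeeping for $\phi$, since the Leibniz and chain rules for \emph{minimal} weak upper gradients require the standard but nontrivial justifications in the metric setting.
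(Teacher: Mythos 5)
Your argument is correct, and since the paper simply cites Bj\"orn~\cite{B} without reproducing a proof, there is no in-text proof to compare against verbatim; the nearest analogue in the paper is the proof of Theorem~\ref{Mazya_Frozen}, which follows the Bj\"orn template. That template pivots around an $L^{t}$-type quantity $\bar{u}$ built directly from the left-hand side, uses the (untruncated) test function $\eta(1-u/\bar u)$ to bound the capacity, and then feeds everything through the Poincar\'e inequality via an observation of the form $u_{B_r}\le\bar u$. Your route is a genuine variant: you pivot around the $L^{1}$-mean $c=u_{B_r}$ via Minkowski's inequality, handle the oscillation $\bigl(\dashint_{B_r}|u-c|^t\,\dd\mu\bigr)^{1/t}$ directly by the Sobolev--Poincar\'e inequality (Theorem~\ref{sstars}) together with the elementary capacity upper bound ${\rm cap}_s(S,B_r)\le C\mu(B_r)/r^s$, and control $|c|$ separately by testing the capacity with the truncated function $\phi=\eta\min\bigl(1,(c-u)_+/c\bigr)$. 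What the truncated test function buys you is that $\phi$ and the inner factor $v$ sit in $[0,1]$ automatically, so the upper-gradient bookkeeping is tidier than with $\eta(1-u/\bar u)$, at the cost of having to treat the oscillation term independently. A couple of small points worth stating explicitly if you write this out in full: the $(s,s)$-Poincar\'e inequality you invoke is legitimate because the assumed $(1,s)$-Poincar\'e inequality self-improves to a $(s^*,s)$ one by Theorem~\ref{sstars} and one descends to exponent $s$ by Remark~\ref{rem1poin}; and the sharper chain-rule bound $g_v\le c^{-1}g\,\chi_{\{0<u<c\}}$ is not actually needed --- the crude Lipschitz bound $g_v\le c^{-1}g$ already suffices for the final estimate, so you can sidestep the more delicate locality argument for minimal weak upper gradients that you flag at the end.
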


\noindent Throughout the paper, positive constants are denoted by $C$ and 
the dependencies on parameters are listed in the parentheses. We denote 
\[
C(\mathrm{data})=C(C_D,C_{PI},\lambda,p,q,\alpha,[a]_\alpha).
\]

\section{Energy estimates and local boundedness}\label{local boundedness}
In this section, our main interest focuses on proving the so-called Caccioppoli inequality, Lemma \ref{DeGiorgiLemma}, and local boundedness, Corollary \ref{BoundednessInBalls}, for quasiminima of the double-phase integral \eqref{J}. We begin by defining quasiminima with boundary values.

\begin{definition}\label{qmbvJana}

Let $w\in N^{1,1}(X)$ with $H(\cdot,g_w)\in L^1(X)$.
	A function $u\in N^{1,1}(X)$  with $H(\cdot,g_u)\in L^1(X)$ is a quasiminimizer on $\Omega$ with boundary data $w$
	if $u-w\in N^{1,1}_0(\Omega)$ and there exists a constant $K\geq 1$ such that 
	\[
		\int_{\{u\ne v\}}H(x,g_u)\, \dd \mu
		\leq K\int_{\{u\ne v\}}H(x,g_v)\, \dd \mu,
	\]
for every function $v \in N^{1,1}(\Omega)$ with $u-v\in  N^{1,1}_0(\Omega)$.
\end{definition}

\begin{lemma}[Caccioppoli inequality]\label{DeGiorgiLemma} Assume that  $u\in N^{1,1}(X)$ with $H(\cdot,g_u)\in L^1(X)$  is a quasiminimizer in $\Omega$ with boundary data $w\in N^{1,1}(X)$, $H(\cdot,g_w)\in L^1(X)$. Let $x_0 \in X$, $B(x_0,r)=B_{r}$ for every $r>0$, and $S_{k,r}=\{x\in B_r:u(x)>k\}$. Then, for $0<r<R$ and $k\geq \esssup_{B_R}w$ there exists $C= C(K,q)>0$ such that the following inequality 
\begin{align}\label{CaccioppoliIneq}\int_{S_{k,r}}H(x,g_u)\, \dd \mu	& \leq C\int_{B_R} H\left(x,\frac{(u-k)_+}{R-r}\right)\, \dd\mu
\end{align} is satisfied, where $(u-k)_+= \max\{u-k,0\}$. 
\end{lemma}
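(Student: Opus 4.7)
The plan is to apply the standard De Giorgi / hole-filling scheme, adapted to the metric, double-phase setting. Fix intermediate radii $r \le \rho < \rho' \le R$ and a Lipschitz cutoff $\eta$ with $0 \le \eta \le 1$, $\eta \equiv 1$ on $B_\rho$, $\mathrm{supp}\,\eta \subset B_{\rho'}$, and upper gradient $g_\eta \le C/(\rho'-\rho)$. The natural comparison function to plug into Definition~\ref{qmbvJana} is $v = u - \eta(u-k)_+$. The first step is to verify admissibility, i.e.\ $u - v = \eta(u-k)_+ \in N^{1,1}_0(\Omega)$. Since $u - w \in N^{1,1}_0(\Omega)$, we have $u = w$ $\mu$-a.e.\ on $X \setminus \Omega$; the hypothesis $k \ge \esssup_{B_R} w$ then forces $(u-k)_+ = 0$ $\mu$-a.e.\ on $(X \setminus \Omega) \cap B_R$, while $\eta$ vanishes outside $B_{\rho'} \subset B_R$, establishing the claim. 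Note also that $S_{k,\rho} \subseteq \{u \ne v\} \subseteq S_{k,\rho'}$, since $v = k \ne u$ on $B_\rho \cap \{u > k\}$ and $v = u$ elsewhere outside this set.

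Next, on $\{u > k\}$ one has the decomposition $v - k = (1-\eta)(u-k)$, so the Leibniz product rule for minimal $p$-weak upper gradients gives $g_v \le (1-\eta) g_u + (u-k)_+ g_\eta$ on $S_{k,\rho'}$. Using $(A+B)^s \le 2^{s-1}(A^s + B^s)$ for $s \in \{p, q\}$, $0 \le 1-\eta \le 1$, and $g_\eta \le C/(\rho'-\rho)$, the definition of $H$ yields pointwise
\[
H(x, g_v) \le 2^{q-1}(1-\eta)^p g_u^p + 2^{q-1} a(x)(1-\eta)^q g_u^q + 2^{q-1} H\!\left(x, \tfrac{C(u-k)_+}{\rho'-\rho}\right) \quad \text{on } S_{k,\rho'}.
\]
Since $(1-\eta) \equiv 0$ on $B_\rho$, inserting this bound into the quasiminimizer inequality on $\{u \ne v\}$ produces
\[
\int_{S_{k,\rho}} H(x, g_u)\,\dd\mu \le C(K,q)\!\!\int_{S_{k,\rho'}\setminus S_{k,\rho}}\! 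H(x, g_u)\,\dd\mu + C(K,q)\!\int_{B_R} H\!\left(x, \tfrac{C(u-k)_+}{\rho'-\rho}\right)\dd\mu.
\]

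Finally, classical hole-filling — adding $C(K,q)\int_{S_{k,\rho}} H(x, g_u)\,\dd\mu$ to both sides — produces $f(\rho) \le \theta f(\rho') + \Phi(\rho,\rho')$, where $f(\rho) := \int_{S_{k,\rho}} H(x, g_u)\,\dd\mu$, $\theta = C(K,q)/(1+C(K,q)) < 1$, and $\Phi(\rho,\rho')$ is a sum of $(\rho'-\rho)^{-p}$- and $(\rho'-\rho)^{-q}$-weighted integrals of $(u-k)_+^p$ and $a(x)(u-k)_+^q$ over $B_R$. The standard Giaquinta--Giusti iteration lemma applied on $[r, R]$ then absorbs the $\theta f(\rho')$ term and yields \eqref{CaccioppoliIneq}. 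The main technical subtlety is the admissibility verification at the boundary, which is precisely where the assumption $k \ge \esssup_{B_R} w$ is essential; the double-phase structure is handled with no extra difficulty by running the estimates with the worse exponent $q$ wherever a single constant is needed.
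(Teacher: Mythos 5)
Your proposal is correct and follows essentially the same route as the paper: the same comparison function $v = u - \eta(u-k)_+$, the same use of $k \ge \esssup_{B_R}w$ together with $u-w \in N^{1,1}_0(\Omega)$ to verify admissibility, the Leibniz rule for upper gradients, hole-filling with $\theta = C(K,q)/(1+C(K,q)) < 1$, and Giusti's iteration lemma. The only cosmetic difference is that you make the intermediate radii $\rho < \rho'$ explicit before invoking the iteration lemma, whereas the paper writes the estimate directly with $r$ and $R$ and leaves the pass to general intermediate radii implicit; your version is, if anything, slightly cleaner on that point.
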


\begin{proof}
The proof is a modification of the one originally given for local quasiminima of double-phase functionals in \cite[Lemma 2.9]{KNP}, which we furthermore specialize to the case of quasiminimizer $u\in N^{1,1}(X)$ with boundary data as in Definition \ref{qmbvJana}.\\

\noindent Let $\eta$ be a $(R-r)^{-1}$-Lipschitz cutoff function such that $0\leq \eta \leq1$, $\eta=1$ on $B_r$ and $\eta=0$ in $X\setminus B_R$.
Let $v=u- \eta(u-k)_+$. Then, as $u=w\leq k$ on $B_R\setminus\Omega$ and $\eta=0$ outside of $B_R$, we have $u-v\in N^{1,1}_0(\Omega)$. Also, $v=(1-\eta)(u-k)_+ +k$ on $S_{k,R}$ and $v=u$ outside $S_{k,R}$.\\

\noindent By the Leibniz rule for the upper gradients \cite[Lemma 2.18]{BB}, we have
\begin{equation*}
g_{v}\leq (u-k)_+g_{\eta}+(1-\eta) g_u +g_u \chi_{X\setminus S_{k,R}}.
\end{equation*}
Since $u$ is a quasiminimizer, by Definition \ref{qmbvJana} we obtain
\begin{equation}\label{5.7}
\begin{split}
\int_{S_{k,r}}H(x,g_u)\, \dd \mu  
&\leq\int_{\{u\neq v\}}H(x,g_u)\, \dd\mu 
\leq K\int_{\{u\neq v\}}H(x,g_v)\, \dd\mu \\
& \leq K\left(\int_{S_{k,R}} H(x, (u-k)_+g_{\eta}+(1-\eta) g_u) \, \dd \mu\right)\\
& \leq 2^qK\left(\int_{B_{R}}H\left(x,\frac{(u-k)_+}{R-r}\right)\,\dd\mu
+\int_{S_{k,R}\setminus S_{k,r}}H(x,g_u)\, \dd \mu\right).
\end{split}
\end{equation}	
By adding $K 2^{q}\int_{S_{k,r}}H(x,g_u)\, \dd \mu$ to the both sides of (\ref{5.7}), we get 
\[
(1+K 2^{q}) \int_{S_{k,r}}H(x,g_u)\, \dd \mu   
\leq K2^{q}\left(\int_{B_{R}}H\left(x,\frac{(u-k)_+}{R-r}\right)\, \dd\mu
+ \int_{S_{k,R}}H(x,g_u)\, \dd \mu\right).
\]
This implies
\begin{align*}
\int_{S_{k,r}}H(x,g_u)\, \dd \mu   
&\leq \theta\left(\int_{B_{R}}H\left(x,\frac{(u-k)_+}{R-r}\right)\, \dd\mu
+ \int_{S_{k,R}}H(x,g_u)\, \dd \mu\right)\\
&\le(R-r)^{-p}\int_{B_{R}}(u-k)_+^p\, \dd \mu
+(R-r)^{-q}\int_{B_{R}}a(u-k)_+^q\, \dd \mu
+\theta\int_{S_{k,R}}H(x,g_u)\, \dd \mu,
\end{align*}
with $\theta= \frac{K 2^{q}}{1+K 2^{q}}<1$.
We apply a standard iteration lemma, see \cite[Lemma 6.1]{G}, to obtain
\[
\int_{S_{k,r}}H(x,g_u)\, \dd \mu
\leq C\int_{B_{R}}H\left(x,\frac{(u-k)_+}{R-r}\right)\, \dd\mu,
\]
where $C=C(q,K)$.
\end{proof}

\begin{remark}\label{Proposition 3.3KS}
	Notice that if $u$ is a quasiminimizer then $-u$ is also a quasiminimizer. Therefore, by Lemma \ref{DeGiorgiLemma},  we get that $-u$ satisfies \eqref{CaccioppoliIneq}.
\end{remark}

\noindent The goal now is to prove that a quasiminimizer, as Definition \ref{qmbvJana}, is locally bounded. We first prove an auxiliary lemma. We note that a version of these results was first proved for local quasminimizers in \cite{PCN2}. The main difference is that the results proven in the present work hold for any ball in $X$ and not only for compactly contained balls in $\Omega$.

\begin{lemma}\label{lemmaestimate2} Let $u\in N^{1,1}(X)$ with $H(\cdot, g_u)\in L^1(X)$, $x_0\in X$, $B(x_0,R)=B_{R}$, $0<\frac{R}{2}<\rho<s\leq R\leq \min\lbrace 1, \frac{\diam(X)}{6}\rbrace$, and concentric balls $B_{\rho}\subset B_s\subseteq B_R$. Assume $u$ satisfies the double-phase Caccioppoli inequality \eqref{CaccioppoliIneq} for all $k\geq k^*$ and $0<r<R$. Then, for any positive numbers $k^*\leq h<k$, there exists a constant $C= C(\data, \Vert u\Vert_{N^{1,p}(X)})$, and an exponent $0<\theta= \theta(data)$, such that
   \begin{equation*}
	\int_{S_{k,\rho}}H(x,u-k) \dd\mu\leq \frac{C}{(s-\rho)^{q-p}}\left(\dfrac{\mu(S_{k, s})}{\mu(B_s)}\right)^{\theta} \int_{S_{h,s}} H\left(x,\frac{u-h}{s-\rho}\right) \dd\mu.
\end{equation*}
\end{lemma}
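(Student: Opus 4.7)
The plan is to split $H(x,u-k)=(u-k)^p+a(x)(u-k)^q$ into its two phases and, for each, chain H\"older's inequality with the Sobolev--Poincar\'e inequality (Theorem \ref{sstars}) or its Maz'ya-type capacity version (Proposition \ref{MazyaEstimate}), and then with the Caccioppoli inequality \eqref{CaccioppoliIneq} from Lemma \ref{DeGiorgiLemma}. The measure ratio $(\mu(S_{k,s})/\mu(B_s))^\theta$ will emerge from H\"older when passing from the natural exponent $p$ (respectively $q$) to a higher Sobolev exponent, while the prefactor $1/(s-\rho)^{q-p}$ will arise at the end, when the $p$- and $q$-phase bounds are unified into a single integral involving $H(x,(u-h)/(s-\rho))$.

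Concretely, for the $p$-phase I would fix $\chi=p^*/p>1$ (with $p^*$ from Theorem \ref{sstars}, or any large exponent via Corollary \ref{coropoinca} when $Q\le p$) and apply H\"older's inequality,
\[
\int_{S_{k,\rho}}(u-k)^p\,\dd\mu\le \mu(S_{k,\rho})^{1-1/\chi}\left(\int_{B_\rho}(u-k)_+^{p\chi}\,\dd\mu\right)^{1/\chi}.
\]
Since $(u-k)_+$ vanishes on $B_\rho\setminus S_{k,\rho}$, Proposition \ref{MazyaEstimate} applied with $S=B_{\rho/2}\setminus S_{k,\rho/2}$ (or the Sobolev--Poincar\'e inequality with a zero-median argument) bounds the last factor by a constant multiple of $\rho^p\,\mu(B_\rho)^{1/\chi-1}\int g_u^p\chi_{S_{k,\cdot}}\,\dd\mu$ on an enlarged ball. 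The Caccioppoli inequality \eqref{CaccioppoliIneq}, applied at level $h$ on suitably chosen intermediate radii in $(\rho,s)$, then replaces this $g_u^p$-integral by $\int_{S_{h,s}}H(x,(u-h)/(s-\rho))\,\dd\mu$. Using doubling with $R/2<\rho<s\le R\le 1$ to get $\mu(B_\rho)\simeq\mu(B_s)$ and $\mu(S_{k,\rho})\le\mu(S_{k,s})$, one rewrites the prefactor in terms of $B_s$ and $S_{k,s}$, producing the $p$-phase bound with $\theta=1-p/p^*>0$. The $q$-phase is handled analogously, using that $a$ is bounded on $B_R$ by its H\"older regularity \eqref{aalpha} and that $q<p^*$ thanks to \eqref{pqcond}; this allows the same Sobolev--Poincar\'e exponent $p^*$ to be used after pulling out $\|a\|_{L^\infty(B_R)}$. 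Unifying the two phase estimates and exploiting $s-\rho\le R\le 1$ to absorb the mismatched scalings $1/(s-\rho)^p$ and $1/(s-\rho)^q$ into a single prefactor $1/(s-\rho)^{q-p}$ multiplying $\int_{S_{h,s}}H(x,(u-h)/(s-\rho))\,\dd\mu$ yields the conclusion.

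The main obstacle I anticipate is the dilation factor $\lambda\ge 1$ in the Sobolev--Poincar\'e (equivalently Maz'ya) inequality: since $\rho$ is already comparable to $s$, the enlarged ball $B_{\lambda\rho}$ may overshoot $B_s$, preventing a direct chaining with Caccioppoli. The standard remedy, which I would adopt here, is to apply Sobolev--Poincar\'e on a covering of $B_\rho$ by balls of radius of order $(s-\rho)/\lambda$ (whose $\lambda$-enlargements then fit inside $B_s$), sum the resulting bounds using doubling, and finally apply \eqref{CaccioppoliIneq} on $B_s$; the hypothesis $R\le\diam(X)/6$ provides the room needed to choose such intermediate radii. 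A minor bookkeeping point is that \eqref{CaccioppoliIneq} is stated at level $k$, whereas the right-hand side of the target inequality is at level $h<k$; this is handled by noting $S_{k,r}\subset S_{h,r}$ and applying Caccioppoli at level $h$ on the enlarged ball before substituting back.
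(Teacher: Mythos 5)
There is a genuine gap in the $p$-phase step. You propose to bound $\bigl(\int_{B_\rho}(u-k)_+^{p\chi}\,\dd\mu\bigr)^{1/\chi}$ via Proposition~\ref{MazyaEstimate} with $S=B_{\rho/2}\setminus S_{k,\rho/2}$, and claim the result is a constant multiple of $\rho^p\mu(B_\rho)^{1/\chi-1}\int g_u^p\chi_{S_{k,\cdot}}\,\dd\mu$. But the Maz'ya estimate carries a factor $1/\mathrm{cap}_s(S,B_\rho)$, and the lemma is asserted for \emph{all} levels $k\ge k^*$, with no density hypothesis on the sublevel set. If $u>k$ a.e.\ on $B_{\rho/2}$ then $\mu(S)=0$, the capacity vanishes, and the Maz'ya bound is vacuous; even when $\mu(S)>0$ there is no lower bound on $\mathrm{cap}_s(S,B_\rho)$ independent of $k$. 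The ``zero-median'' alternative has the same defect: Theorem~\ref{sstars} subtracts the mean, which cannot be absorbed without a comparable density condition. The paper sidesteps this entirely by introducing a cutoff $\tau$ ($=1$ on $B_\rho$, supported in $B_t=B_{(s+\rho)/2}$) and working with $w=\tau(u-k)_+\in N^{1,1}_0(B_t)$, so that the Sobolev-type inequality can be applied to a function with zero boundary values — no capacity lower bound is needed, and the dilation factor $\lambda$ that worried you also disappears (making your covering construction unnecessary).

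There is a second, structural problem with the phase-splitting. In the $q$-phase you pull out $\|a\|_{L^\infty(B_R)}$ and plan to run a Sobolev--Poincar\'e chain at exponent $q$; but this presupposes $g_u\in L^q_{\rm loc}$, which is not available: the hypothesis $H(\cdot,g_u)\in L^1(X)$ only gives $g_u\in L^p$ and $a^{1/q}g_u\in L^q$, and Caccioppoli controls $\int a\,g_u^q$, not $\int g_u^q$. In fact decoupling the functional into two scalar Sobolev inequalities loses exactly the structure that makes the double-phase problem tractable. The paper instead invokes the intrinsic Sobolev--Poincar\'e inequality for $H$ applied to $w\in N^{1,1}_0(B_t)$, namely \cite[Lemma~4]{PCN2} (this is where the exponents $d_1,d_2$ come from): it bounds $\bigl(\dashint_{B_t}H(x,w/t)^{d_1}\bigr)^{1/d_1}$ directly by $\bigl(\dashint_{B_t}H(x,g_w)^{d_2}\bigr)^{1/d_2}$ with an extra factor $1+\|g_w\|_{L^p(B_t)}^{q-p}\mu(B_t)^{\alpha/Q-(q-p)/p}$; this factor is then controlled by $\|u\|_{N^{1,p}(X)}^{q-p}$ (which explains why that norm enters the constant) together with the upper $Q$-Ahlfors regularity~\eqref{upper Q-Ahlfors} and the exponent condition~\eqref{pqcond}. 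The measure ratio $(\mu(S_{k,s})/\mu(B_s))^{\theta}$ with $\theta=1/d_2-1$ then comes from a single H\"older step on the small exponent $d_2<1$. Your overall scheme (H\"older for the measure ratio, Sobolev to pass to the gradient, Caccioppoli to close the loop, unify scalings using $s-\rho\le 1$) is the right shape, but without the cutoff and without the double-phase Sobolev lemma the argument does not close.
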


\begin{proof}
 Let $0<\frac{R}{2}<\rho<s\leq R\leq \min\lbrace 1, \frac{\diam(X)}{6}\rbrace$. We define $t=\frac{s+\rho}{2}$ to simplify notation. Notice that, by definition, $\rho<t<s$. By the double-phase Caccioppoli inequality \eqref{CaccioppoliIneq}, for $k\geq k^*$ we have
\begin{align}\label{ks2}
	\int_{B_t} H(x, g_{(u-k)_+}) \dd \mu&\leq  C \int_{B_s} H\left(x,\frac{(u-k)_+}{s-t}\right)\, \dd\mu \nonumber\\
& \leq  C 2^{q}  \int_{B_s} H\left(x,\frac{(u-k)_+}{s-\rho}\right)\, \dd\mu \nonumber\\
&= C \int_{B_s} H\left(x,\frac{(u-k)_+}{s-\rho}\right)\, \dd\mu,
\end{align}
where $C=C(K,q)$. Let $\tau$ be a $\dfrac{1}{s-\rho}$-Lipschitz cutoff function so that $0\leq \tau \leq1$, $\tau=1$ on $B_\rho$ and the support of $\tau$ is contained in $B_t$. Let $w=\tau (u-k)_+\in N_0^{1,1}(B_t)$. By Leibniz rule, \cite[Lemma 2.18]{BB}, we have $$g_w \leq g_{(u-k)_+}\tau +(u-k)_+ g_{\tau}\leq g_{(u-k)_+}+ \frac{1}{s-\rho}(u-k)_+,\qquad\textrm{on }B_t.$$
Using inequality \eqref{ks2}, we get
\begin{align}\label{15notes}
	\int_{B_t} H(x,g_w) \dd \mu&\leq  2^{q-1}\int_{B_t} H(x,g_{(u-k)_+}) \dd \mu+2^{q-1}\int_{B_{t}} H\left(x,\frac{(u-k)_+}{s-\rho}\right)\dd \mu \nonumber \\  
	&\leq C \, 2^{q-1} \int_{B_s} H\left(x,\frac{(u-k)_+}{s-\rho}\right)\dd \mu 
+2^{q-1} \int_{B_s}H\left(x,\frac{(u-k)_+}{s-\rho}\right) \dd \mu \nonumber \\  
	&=  C \int_{B_s}H\left(x,\frac{(u-k)_+}{s-\rho}\right) \dd \mu.
\end{align}

\noindent By H\"older inequality, the doubling property, the definition of $w$ and \cite[Lemma 4]{PCN2}, there is a constant $C=C({\rm data})$ and exponents $0<d_2<1\leq d_1<\infty$, such that

\begin{align}\label{ks40}
\dashint_{B_{\rho}}H\left(x,(u-k)_+\right)  \dd\mu&\leq \dashint_{B_{\rho}}H\left(x,\frac{(u-k)_+}{t}\right) \dd\mu\nonumber\\
&\leq   
\left(\dashint_{B_{\rho}}H\left(x,\frac{(u-k)_+}{t}\right) ^{d_1} \dd\mu\right)^{\frac{1}{d_1}}\nonumber\\ 
&=\left(\dashint_{B_{\rho}}H\left(x,\frac{\tau(u-k)_+}{t}\right) ^{d_1} \dd\mu\right)^{\frac{1}{d_1}}\nonumber\\ 
 &= \left(\dashint_{B_{\rho}}H\left(x,\frac{w}{t}\right) ^{d_1} \dd\mu\right)^{\frac{1}{d_1}}\nonumber\\ 
 &\leq \left(\dfrac{\mu\big(B_{t}\big)}{\mu(B_{\rho})}\right)^{\frac{1}{d_1}}\left(\dashint_{B_{t}} H\left(x,\frac{w}{t}\right) ^{d_1} \dd\mu\right)^{\frac{1}{d_1}} \nonumber\\	
 &\leq C\left(1+\|g_w\|^{q-p}_{L^p(B_{t})} \mu(B_{t})^{\frac{\alpha}{Q}-\frac{q-p}{p}}\right)\left(\dashint_{B_{t}}H\left(x,g_w\right) ^{d_2} \dd\mu\right)^{\frac{1}{d_2}}.
\end{align}
In the second to last inequality, we used the doubling property to estimate 
\begin{equation}\label{starextra}
    \left(\dfrac{\mu\big(B_{t}\big)}{\mu(B_{\rho})}\right)^{\frac{1}{d_1}}\leq C,
\end{equation}
where $C$ depends on the exponent $Q$ in \eqref{s}. By H\"{o}lder inequality, we have
\begin{align}\label{ks4first}
\left(\dashint_{B_{t}}H\left(x,g_w\right) ^{d_2} \dd\mu\right)^{\frac{1}{d_2}}&=\mu(B_{t})^{-\frac{1}{d_2}}\left(\int_{S_{k, t}}H\left(x,g_w\right)^{d_2} \dd\mu\right)^{\frac{1}{d_2}}\nonumber\\
&\leq \mu(B_{t})^{-\frac{1}{d_2}}\mu(S_{k, t})^{\frac{1}{d_2}-1}\int_{S_{k, t}}H\left(x,g_w\right) \dd\mu.
\end{align}
By \eqref{ks40}, \eqref{ks4first} and \eqref{15notes}, we get 
\begin{align}\label{ks4} 
&\int_{B_{\rho}}H\left(x,(u-k)_+\right)  \dd\mu\nonumber\\
&\leq C\left(1+\|g_w\|^{q-p}_{L^p(B_{t})} \mu(B_{t})^{\frac{\alpha}{Q}-\frac{q-p}{p}}\right)\left(\dfrac{\mu(S_{k, t})}{\mu(B_{t})}\right)^{\frac{1}{d_2}-1} \int_{S_{k, t}}H\left(x,g_w\right) \dd\mu \nonumber\\
 &\leq C\left(1+\|g_w\|^{q-p}_{L^p(B_t)} \mu(B_R)^{\frac{\alpha}{Q}-\frac{q-p}{p}}\right)\left(\dfrac{\mu(S_{k, t})}{\mu(B_{t})}\right)^{\frac{1}{d_2}-1}\int_{B_s}H\left(x,\frac{(u-k)_+}{s-\rho}\right) \dd \mu.
\end{align}
On the other hand, notice that 
\begin{align*}
\Vert g_w\Vert_{L^p(B_t)}^p&=\int_{B_t}g_w^p\dd\mu\nonumber\\
&\leq \int_{B_t}\left(g_{(u-k)_+}+\frac{(u-k)_+}{s-\rho}\right)^p\dd\mu\nonumber\\
&\leq 2^{p-1}\left(\int_{B_t}g_{(u-k)_+}^p\dd\mu+\int_{B_t}\left(\frac{(u-k)_+}{s-\rho}\right)^p\dd\mu\right)\\
&\leq C\left(\Vert g_u\Vert_{L^p(B_t)}^{p}+\frac{\Vert u\Vert^p_{L^p(B_t)}}{(s-\rho)^p}\right)\\
&\leq \frac{C}{(s-\rho)^p}\left(\Vert g_u\Vert_{L^p(B_t)}^{p}+\Vert u\Vert^p_{L^p(B_t)}\right).
\end{align*}
Since $u\in N^{1,1}(X)$, $H(\cdot,g_u)\in L^{1}(X)$, $B_t\subset B_R\subset X$ and the nature of the double-phase functional, then $u\in N^{1,p}(X)$. Therefore,
\begin{align*}
\Vert g_w\Vert_{L^p(B_t)}^{q-p}&\leq\frac{C}{(s-\rho)^{q-p}}\left(\Vert g_u\Vert_{L^p(B_t)}^{p}+\Vert u\Vert^p_{L^p(B_t)}\right)^{\frac{q-p}{p}}\\
&\leq\frac{C}{(s-\rho)^{q-p}}\left(\Vert g_u\Vert_{L^p(B_t)}+\Vert u\Vert_{L^p(B_t)}\right)^{q-p}\\
&=\frac{C}{(s-\rho)^{q-p}}\Vert u\Vert_{N^{1,p}(B_t)}^{q-p}\\
&\leq\frac{C}{(s-\rho)^{q-p}}\Vert u\Vert_{N^{1,p}(X)}^{q-p}.
\end{align*}
By \eqref{ks4}, the assumption that our measure is upper $Q$-Alhfors regular \eqref{upper Q-Ahlfors}, $r\leq 1$ and the last inequality, we then obtain

\begin{align}\label{new1}
\int_{B_{\rho}}&H\left(x,(u-k)_+\right)  \dd\mu\nonumber\\
&\leq \frac{C}{(s-\rho)^{q-p}}\left(1+\Vert u\Vert_{N^{1,p}(X)}^{q-p}\mu(B_R)^{\frac{\alpha}{Q}-\frac{q-p}{p}}\right)\left(\dfrac{\mu(S_{k, t})}{\mu(B_{t})}\right)^{\frac{1}{d_2}-1}\int_{B_s}H\left(x,\frac{(u-k)_+}{s-\rho}\right) \dd \mu \nonumber\\
&\leq \frac{C}{(s-\rho)^{q-p}}\left(1+\Vert u\Vert_{N^{1,p}(X)}^{q-p}\right)\left(\dfrac{\mu(S_{k, t})}{\mu(B_{t})}\right)^{\frac{1}{d_2}-1}\int_{B_s}H\left(x,\frac{(u-k)_+}{s-\rho}\right) \dd \mu  \nonumber\\
&= \frac{C}{(s-\rho)^{q-p}}\left(\dfrac{\mu(S_{k, t})}{\mu(B_{t})}\right)^{\frac{1}{d_2}-1}\int_{B_s}H\left(x,\frac{(u-k)_+}{s-\rho}\right) \dd \mu,
\end{align}
where $C=C(\data, C_1, \Vert u\Vert_{N^{1,p}(X)})$.\\

\noindent Furthermore, we observe that $\mu(S_{k, t})\leq \mu(S_{k, s})$ and $\frac{1}{d_2}-1>0$. Thus, by \eqref{new1} and the doubling property of the measure (as we did in \eqref{starextra}) we get

\begin{align}\label{algo}
\int_{B_{\rho}}H\left(x,(u-k)_+\right)  \dd\mu \leq \frac{C}{(s-\rho)^{q-p}}\left(\dfrac{\mu(S_{k, s})}{\mu(B_{s})}\right)^{\frac{1}{d_2}-1}\int_{B_s}H\left(x,\frac{(u-k)_+}{s-\rho}\right) \dd \mu.
\end{align}
with $C= C(\data, C_1, \Vert u\Vert_{N^{1,p}(X)})$.\\

\noindent Let $k^*\leq h<k$, then $(u-k)_+\leq (u-h)_+$. Therefore, we have
\begin{align*}
\int_{S_{k,\rho}}H(x,u-k) \dd\mu &\leq  \frac{C}{(s-\rho)^{q-p}}\left(\dfrac{\mu(S_{k, s})}{\mu(B_{s})}\right)^{\frac{1}{d_2}-1} \int_{S_{h,s}} H\left(x,\frac{u-h}{s-\rho}\right) \dd \mu.
\end{align*}
That is
\begin{equation*}
	\int_{S_{k,\rho}}H(x,u-k) \dd\mu\leq \frac{C}{(s-\rho)^{q-p}}\left(\dfrac{\mu(S_{k, s})}{\mu(B_s)}\right)^{\theta} \int_{S_{h,s}} H\left(x,\frac{u-h}{s-\rho}\right) \dd\mu,
\end{equation*}
where $\theta=\frac{1}{d_2}-1>0$. 
\end{proof}

\noindent As a consequence of the previous lemma, we obtain the following weak Harnack inequality type result.

\begin{theorem}\label{WeakHarnackIneqDoublePhaseBoundary}
    Let $X$ be a doubling metric measure space admitting a weak $(1,p)$-Poincar\'e inequality. Then, there exists $C>0$, $C=C(\data, C_1, R, \Vert u\Vert_{N^{1,p}(X)})$, such that if $u\in N^{1,1}(X)$ with $H(\cdot, g_u)\in L^1(X)$ and the condition \eqref{CaccioppoliIneq} holds for all $k\geq k^*$ and $0<r<R<\min\lbrace 1, \frac{\diam(X)}{6}\rbrace$, then for all $k_0\geq k^*$
    \begin{equation}\label{weakHarnackBoundary}
    \esssup_{B_{R/2}}u\leq k_0+C\left(\dashint_{B_R}H\left(x,(u-k_0)_+\right)\dd\mu\right)^{\frac{1}{p}}        
    \end{equation}
\end{theorem}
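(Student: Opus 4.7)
The plan is to run a De Giorgi iteration based on Lemma~\ref{lemmaestimate2}. For a parameter $d>0$ to be chosen, introduce the decreasing radii $\rho_i = R/2 + R/2^{i+1}$ and the increasing levels $k_i = k_0 + d(1 - 2^{-i})$, and set
\begin{equation*}
Y_i = \frac{1}{\mu(B_R)}\int_{B_{\rho_i}} H(x,(u-k_i)_+)\,\dd\mu,
\end{equation*}
so that $\rho_i \downarrow R/2$, $k_i \uparrow k_0+d$, and $Y_0 = \dashint_{B_R} H(x,(u-k_0)_+)\,\dd\mu$. The goal is to prove $Y_i\to 0$, which via Fatou forces $u\leq k_0+d$ $\mu$-a.e.\ on $B_{R/2}$; a calibration of $d$ at the end will then yield the claimed bound.

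Applying Lemma~\ref{lemmaestimate2} with $k=k_{i+1}$, $h=k_i$, $\rho=\rho_{i+1}$, $s=\rho_i$, and using the elementary bound $H(x,\lambda v)\leq \lambda^q H(x,v)$ for $\lambda = 1/(\rho_i-\rho_{i+1}) \geq 1$ (valid because $R\leq 1$) to move the scaling outside the integral, gives
\begin{equation*}
\int_{S_{k_{i+1},\rho_{i+1}}} H(x,u-k_{i+1})\,\dd\mu \leq \frac{C}{(\rho_i-\rho_{i+1})^{2q-p}} \left(\frac{\mu(S_{k_{i+1},\rho_i})}{\mu(B_{\rho_i})}\right)^{\theta}\mu(B_R)\, Y_i.
\end{equation*}
A Chebyshev inequality, combined with $H(x,u-k_i)\geq (u-k_i)^p$ and $u-k_i \geq d/2^{i+1}$ on $S_{k_{i+1},\rho_i}$, yields $\mu(S_{k_{i+1},\rho_i}) \leq 2^{p(i+1)} d^{-p} \mu(B_R)\, Y_i$. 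Inserting this and using doubling to bound $\mu(B_R)/\mu(B_{\rho_i})$ by a constant, then normalizing by $\mu(B_R)$, produces the clean recursion
\begin{equation*}
Y_{i+1} \leq \frac{\widetilde C\, b^{\,i}}{R^{2q-p}\, d^{\,p\theta}}\, Y_i^{1+\theta}, \qquad b := 2^{\,2q-p+p\theta},
\end{equation*}
with $\widetilde C = \widetilde C(\data, C_1, \Vert u\Vert_{N^{1,p}(X)})$.

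A standard iteration lemma (see, e.g., \cite[Lemma 7.1]{G}) then guarantees $Y_i\to 0$ provided the smallness condition $Y_0^{\theta} \leq R^{2q-p} d^{\,p\theta}/(\widetilde C\, b^{1/\theta})$ holds. It therefore suffices to take $d^{\,p} = C(\data, C_1, R, \Vert u\Vert_{N^{1,p}(X)})\, Y_0$ for a suitable constant, which delivers $u\leq k_0 + d$ on $B_{R/2}$ and hence the desired bound with the prescribed dependence of the constant.

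The principal technical difficulty is the bookkeeping of the three competing scaling factors: the $(\rho_i-\rho_{i+1})^{-(q-p)}$ inherited from Lemma~\ref{lemmaestimate2}, the additional $(\rho_i-\rho_{i+1})^{-q}$ needed to absorb the non-standard homogeneity of $H$, and the measure ratio $\mu(S_{k_{i+1},\rho_i})/\mu(B_{\rho_i})$. The recursion only takes the correct scale-invariant form after normalizing every integral by the fixed measure $\mu(B_R)$ and systematically using doubling to replace $\mu(B_{\rho_i})$ by $\mu(B_R)$ up to constants; the leftover power $R^{-(2q-p)}$ is absorbed into the $R$-dependent constant the theorem tolerates.
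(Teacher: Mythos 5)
Your proof is correct and follows essentially the same route as the paper: both run a De Giorgi iteration using Lemma~\ref{lemmaestimate2} with the same geometric radii $\rho_i$ and levels $k_i$, use the same Chebyshev estimate to control $\mu(S_{k_{i+1},\rho_i})$, and close with the iteration lemma \cite[Lemma 7.1]{G}. The only cosmetic difference is that you normalize by $\mu(B_R)$ (working with $Y_i=\mu(B_R)^{-1}\int_{B_{\rho_i}}H(x,(u-k_i)_+)\,\dd\mu$) while the paper normalizes by $d^{-p}$ (working with $\psi_i=d^{-p}\int_{S_{k_i,\rho_i}}H(x,u-k_i)\,\dd\mu$), and you track slightly tighter exponents in the geometric factor ($b=2^{2q-p+p\theta}$ and $R^{-(2q-p)}$ versus the paper's cruder $4^{(1+\theta)q i}$ and $R^{-2q}$); both choices are absorbed in the final calibration of $d$.
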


\begin{proof}
This proof is similar to the one of \cite[Theorem 3]{PCN2}, with few modifications.
Let $k_0\geq k^*$.
For $n\in \mathbb{N}\cup \{0\}$, let $\rho_{n}=\frac{R}{2}\left(1+\frac{1}{2^{n}}\right)\leq R$ and $k_{n}=k_{0}+d\left(1-\frac{1}{2^n}\right)$, where $d>0$ will be chosen later. Then, $\rho_{0}=R$, $\rho_{n}\searrow \frac{R}{2}$ and $k_n\nearrow k_0+d$.
We apply Lemma \ref{lemmaestimate2} with $\rho=\rho_{i+1}$, $R=\rho_i$, $k=k_{i+1}$ and $h=k_i$ and we get
\begin{align}\label{8notes}
 \int_{S_{k_{i+1},\rho_{i+1}}}H(x,u-k_{i+1}) \dd\mu&\leq \frac{C}{(\rho_i-\rho_{i+1})^{q-p}}\left(\dfrac{\mu(S_{k_{i+1}, \rho_i})}{\mu(B_{R})}\right)^{\theta} \int_{S_{k_i,\rho_i}} H\left(x,\frac{u-k_i}{\rho_i-\rho_{i+1}}\right) \dd\mu\nonumber \\
 & = \frac{C}{(R 2^{-i}2^{-2})^{q-p}}\left(\dfrac{\mu(S_{k_{i+1}, \rho_i})}{\mu(B_{R})}\right)^{\theta} \int_{S_{k_i,\rho_i}} H\left(x,\frac{u-k_i}{R 2^{-i}2^{-2}}\right) \dd\mu\nonumber \\
& = \frac{C2^{i(2q-p)}}{R^{q-p}}\left(\dfrac{\mu(S_{k_{i+1}, \rho_i})}{\mu(B_{R})}\right)^{\theta} \int_{S_{k_i,\rho_i}} H\left(x,\frac{u-k_i}{R}\right) \dd\mu
\nonumber\\
& \leq \frac{C4^{iq}}{R^{2q-p}}\left(\dfrac{\mu(S_{k_{i+1}, \rho_i})}{\mu(B_R)}\right)^{\theta} \int_{S_{k_i,\rho_i}} H(x,u-k_i) \dd\mu
\nonumber\\
& \leq \frac{C4^{iq}}{R^{2q}}\left(\dfrac{\mu(S_{k_{i+1}, \rho_i})}{\mu(B_R)}\right)^{\theta} \int_{S_{k_i,\rho_i}} H(x,u-k_i) \dd\mu.
\end{align}
We observe that 
\begin{align*}
    d^{-p}(k_{i+1}-k_i)^p \mu(S_{k_{i+1}, \rho_i})&=
d^{-p}\int_{S_{k_{i+1}, \rho_i}}(k_{i+1}-k_i)^p \dd\mu\\
&\leq d^{-p}\int_{S_{k_{i+1}, \rho_i}}H(x, u-k_i) \dd\mu\\
&\leq d^{-p}\int_{S_{k_i, \rho_i}}H(x, u-k_i) \dd\mu.
\end{align*}
So, we have 
\begin{equation*}
    \psi_i= d^{-p}\int_{S_{k_i, \rho_i}}H(x, u-k_i) \dd\mu \geq d^{-p}(k_{i+1}-k_i)^p \mu(S_{k_{i+1}, \rho_i}). 
\end{equation*}
This implies
\begin{equation}\label{9notes}
 \mu(S_{k_{i+1}, \rho_i})\leq \psi_i d^{p}(k_{i+1}-k_i)^{-p}. 
\end{equation}
Therefore, by \eqref{8notes} and \eqref{9notes} we obtain 
\begin{align*}
   d^p \psi_{i+1}&\leq \frac{C4^{iq}}{R^{2q}} \left(\dfrac{\mu(S_{k_{i+1}, \rho_i})}{\mu(B_R)}\right)^{\theta} d^p\psi_i\nonumber\\ 
    &\leq \frac{C4^{iq}}{R^{2q}} \left(\psi_i d^{p}(k_{i+1}-k_i)^{-p}\right)^{\theta}d^p\psi_i \mu(B_R)^{-\theta}\nonumber\\
     &= \frac{C4^{iq}}{R^{2q}} (d2^{-1-i})^{-p\theta} d^{p\theta}\psi_i^{1+\theta} d^p \mu(B_R)^{-\theta}\nonumber\\
    &\leq  \frac{C4^{(1+\theta)qi}}{R^{2q}} \psi_i^{1+\theta}d^p \mu(B_R)^{-\theta}.
\end{align*}
That is,
\begin{equation}
 \psi_{i+1} \leq\frac{C4^{(1+\theta)qi}}{R^{2q}} \psi_i^{1+\theta}\mu(B_R)^{-\theta},
\end{equation}
for every $i\geq0$, where $C=C({\rm data},C_1, \Vert u\Vert_{N^{1,p}(X)})$.
By using a standard iteration lemma \cite[Lemma 7.1]{G}
we get \begin{equation*}
 \lim_{i\to \infty}\psi_i=\lim_{i\to \infty}d^{-p}\int_{S_{k_i, \rho_i}}H(x, u-k_i) \dd\mu=0,   
\end{equation*} provided that $d=CR^{\frac{-2q}{p\theta}} \left(\dashint_{B_R}H(x, (u-k_0)_+) \dd\mu\right)^{\frac{1}{p}}>0$. As a consequence, \begin{equation*}
    \int_{B_{\frac{R}{2}}} H(x, (u-(k_0+d))_+ \dd\mu=0
\end{equation*}
and so
\begin{equation}\label{quasiboundedness}
    u\leq k_0+d \quad\mbox{almost everywhere in $B_{\frac{R}{2}}$ and for all $k_0
    \geq k^*$}.
\end{equation}
We conclude that
\begin{align*}
   \esssup_{B_{\frac{R}{2}}}u &\leq k_0 +d= k_0+ C \left(\dashint_{B_R}H\left(x,(u-k_0)_+\right) \, \dd\mu\right)^{\frac{1}{p}},
\end{align*}
where $C=C(\data, C_1, R, \Vert u\Vert_{N^{1,p}(X)})$.
\end{proof}

\noindent As a corollary of Theorem \ref{WeakHarnackIneqDoublePhaseBoundary}, we obtain local boundedness for quasiminima of \eqref{J}. We emphasize that these results are a generalization of those initially obtained in \cite{PCN2}, since here the conclusions are valid for any ball contained in $X$ and not only for balls compactly contained in the domain $\Omega$.

\begin{corollary}\label{BoundednessInBalls} Let $u\in N^{1,1}(X)$ with $H(\cdot, g_u)\in L^1(X)$. Assume $u$ satisfies the double-phase Caccioppoli inequality \eqref{CaccioppoliIneq} for all $k\geq k^*$ and $0<r<R<\min\lbrace 1, \frac{\diam(X)}{6}\rbrace$. Then, 
$$
\Vert u\Vert_{L^{\infty}(B_{R/2})}<\infty
$$
holds. In particular, if $u$ is a quasiminimizer in $\Omega$ with boundary data $w\in N^{1,1}(X)$, $H(\cdot,g_w)\in L^1(X)$, then $u$ is locally bounded. 
\end{corollary}
\begin{proof} Since $u$ satisfies the double-phase Caccioppoli inequality \eqref{CaccioppoliIneq}, then by Theorem \ref{WeakHarnackIneqDoublePhaseBoundary} and, in particular, by inequality \eqref{quasiboundedness}, we obtain the desiered result.\\

\noindent In case $u$ is a quasiminimizer, by Lemma \ref{DeGiorgiLemma}, $u$ satisfies the double-phase Caccioppoli inequality \eqref{CaccioppoliIneq} for all $k\geq \esssup_{B(x_0,R)}w$. Without loss of generality, we can assume $\esssup_{B(x_0,R)}w<\infty$, otherwise by inequality \eqref{quasiboundedness} the result trivially holds true.   
    Therefore, the result follows as before.   
\end{proof}

\noindent Once we have that quasiminima are locally bounded in any ball $B_r\subset X$, we can prove the following almost standard Caccioppoli's inequality. This states that, in the $p$-regime case, double-phase quasiminima satisfy a Caccioppoli type inequality analogous to the one satisfied for quasiminima of functionals with just $p$-growth plus some extra controllable terms. This result was proved in \cite{PCN2} for local quasiminima of double-phase problems. In the Euclidean case this result has been proven by Colombo-Mingione \cite{CM}.
\begin{lemma}[Almost standard Caccioppoli's inequality]\label{Almost standard Caccioppoli's inequality}
Assume that  $u\in N^{1,1}(X)$ with $H(\cdot,g_u)\in L^1(X)$  satisfies the double-phase Caccioppoli inequality \eqref{CaccioppoliIneq} for all $k\geq k^*$ and $0<r<R<\min\lbrace 1, \frac{\diam(X)}{6}\rbrace$. Furthermore, assume that
  \begin{equation}\label{p-regime}
      \sup_{B(x_0,R)} a(x) \leq C [a]_{\alpha} \mu(B(x_0,R))^{\frac{\alpha}{Q}},
  \end{equation}
 holds for $x_0\in X$, $0\leq R\leq 1$.
  Then there exists $C= C(C_1, p, q,\alpha, Q ,[a]_{\alpha}, \|u\|_{L^{\infty}(B(x_0,R))}, K)>0$ such that for any choice of concentric balls $B(x_0,s)\subset B(x_0,t) \subset B(x_0,R)\subset X $, with $0<t<s\leq R\leq 1$ and $2\Vert u\Vert_{L^{\infty}(B(x_0,R))}\geq \vert k\vert\geq k\geq\esssup_{B(x_0,R)}w$, the following inequality 
\begin{align}\label{ASCaccioppoli}
\int_{B(x_0,t)}g_{(u-k)_+}^p\, \dd \mu	& \leq C \left(\left(\frac{R}{s-t}\right)^q\int_{B(x_0,s)} \left|\frac{(u-k)_+}{R}\right|^p\, \dd\mu\right)
\end{align}
is satisfied, where $(u-k)_+= \max\{u-k,0\}$. In particular, if $u$ is a quasiminimizer in $\Omega$ with boundary data $w\in N^{1,1}(X)$, $H(\cdot, g_w)\in L^1(X)$, then \eqref{ASCaccioppoli} holds as well.
\end{lemma}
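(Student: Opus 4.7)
The plan is to derive the claimed inequality directly from the double-phase Caccioppoli estimate~\eqref{CaccioppoliIneq} of Lemma~\ref{DeGiorgiLemma}, which applies since $u$ is a quasiminimizer with boundary data $w$ and $k\ge\esssup_{B(x_0,R)}w$. Applied with radii $t<s\le R$, it gives
\[
\int_{S_{k,t}} H(x,g_u)\,\dd\mu
\le C\int_{B(x_0,s)} H\!\left(x,\tfrac{(u-k)_+}{s-t}\right)\dd\mu,
\]
with $C=C(q,K)$. On the left I would discard the nonnegative term $a(x)g_u^q$ and use $g_{(u-k)_+}\le g_u$ on $S_{k,t}$ (with $g_{(u-k)_+}=0$ elsewhere), so the left-hand side already controls $\int_{B(x_0,t)} g_{(u-k)_+}^p\,\dd\mu$, which is exactly the LHS of~\eqref{ASCaccioppoli}. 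What remains is to dominate the right-hand side by the target $C(R/(s-t))^q \int_{B(x_0,s)} (u-k)_+^p/R^p\,\dd\mu$.

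I would then split this right-hand side into the $p$-piece $\int ((u-k)_+/(s-t))^p\,\dd\mu$ and the $a$-piece $\int a(x)((u-k)_+/(s-t))^q\,\dd\mu$. For the $p$-piece, since $s-t\le R$ and $q\ge p$, I would factor $(s-t)^{-p}=(s-t)^{-q}(s-t)^{q-p}\le R^{q-p}(s-t)^{-q}$, which immediately yields the desired bound. For the $a$-piece the strategy is to convert the excess power of $(u-k)_+$ into a controllable factor of $R^{q-p}$. By Corollary~\ref{BoundednessInBalls} and the assumption $|k|\le 2\|u\|_{L^\infty(B(x_0,R))}$, the function $(u-k)_+$ is bounded on $B(x_0,R)$ by a constant depending on $\|u\|_{L^\infty(B(x_0,R))}$; hence $(u-k)_+^{q-p}\le C$. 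Combining this with the smallness hypothesis~\eqref{p-regime} and the upper $Q$-Ahlfors regularity~\eqref{upper Q-Ahlfors}, I obtain
\[
a(x)(u-k)_+^{q-p} \le C[a]_\alpha \mu(B(x_0,R))^{\alpha/Q}\|u\|_{L^\infty(B(x_0,R))}^{q-p} \le C R^{\alpha}.
\]

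The hard part will be the final comparison of $R^\alpha$ with $R^{q-p}$. Since $R\le 1$, this requires $\alpha\ge q-p$, which follows from the sharp condition~\eqref{pqcond}, namely $(q-p)/p\le\alpha/Q$, as soon as $Q\ge p$; this can always be arranged thanks to Remark~\ref{rem2poin}, which permits replacing the exponent $Q$ in~\eqref{s} by any larger value without affecting the preceding assertions. Once $R^\alpha\le R^{q-p}$ is in hand, the $a$-piece will be bounded by $CR^{q-p}/(s-t)^q\int (u-k)_+^p\,\dd\mu = C(R/(s-t))^q\int (u-k)_+^p/R^p\,\dd\mu$, which together with the estimate for the $p$-piece yields~\eqref{ASCaccioppoli}. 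The delicate step is thus coordinating the borderline growth condition~\eqref{pqcond} with the freedom to enlarge the doubling exponent $Q$, so that the factor $R^{q-p}$ absorbs the smallness of $a$ uniformly for $R\le 1$.
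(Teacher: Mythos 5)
Your proof is correct and follows essentially the same route as the paper: start from the Caccioppoli inequality of Lemma~\ref{DeGiorgiLemma}, split the right-hand side into the $p$-piece and the $a$-piece, bound the $a$-piece pointwise via the boundedness of $(u-k)_+$, the $p$-regime hypothesis~\eqref{p-regime}, and upper $Q$-Ahlfors regularity~\eqref{upper Q-Ahlfors}, and close the argument with the comparison $R^\alpha \le R^{q-p}$ obtained from~\eqref{pqcond} together with Remark~\ref{rem2poin}. The only cosmetic differences are in the bookkeeping of the factors $R$ and $s-t$ (you factor $(s-t)^{-p}\le R^{q-p}(s-t)^{-q}$ directly, while the paper writes $(R/(s-t))^p\le (R/(s-t))^q$), which are equivalent rearrangements.
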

\begin{proof}
For this proof, we treat the case when $u$ is a quasiminimizer. By Lemma \ref{DeGiorgiLemma}, since $u$ is a quasiminimizer with boundary values $w$, then for all $k\geq\esssup_{B(x_0,R)}w$, $u$ satisfies the double-phase Caccioppoli's inequality \eqref{CaccioppoliIneq}. Therefore, we have 
 \begin{align}\label{asc1}
\int_{B(x_0,t)}g_{(u-k)_+}^p\, \dd \mu &\leq   \int_{B(x_0,t)}H(x,g_{(u-k)_+})\, \dd \mu    \leq  C \int_{B(x_0,s)} H\left(x,\frac{(u-k)_+}{s-t}\right)\, \dd\mu \nonumber\\
& = C \left(\int_{B(x_0,s)} \left|\frac{(u-k)_+}{s-t}\right|^p\, \dd\mu + \int_{B(x_0,s)} a(x)\left|\frac{(u-k)_+}{s-t}\right|^q\, \dd\mu\right)\nonumber\\
& =  C \Bigg(\left(\frac{R}{s-t}\right)^p\int_{B(x_0,s)} \left|\frac{(u-k)_+}{R}\right|^p\, \dd\mu \nonumber\\
&\qquad\qquad\qquad+\left(\frac{R}{s-t}\right)^q \int_{B(x_0,s)} a(x)\left|\frac{(u-k)_+}{R}\right|^q\, \dd\mu\Bigg),
 \end{align}
 here $C=C(K,q)$.\\
 
\noindent Now, we estimate the integrand function in the last term of the previous inequality. Using \eqref{p-regime} and  \eqref{upper Q-Ahlfors}, we get
\begin{align*}
a(x)\left|\frac{(u-k)_+}{R}\right|^q &\leq C[a]_{\alpha}\frac{\mu(B(x_0,R))^{\frac{\alpha}{Q}}}{R^q} \|u\|^{q-p}_{L^{\infty}(B(x_0,s))} |(u-k)_+|^p\\
& \leq C[a]_{\alpha}\frac{C_1 R^{\alpha}}{R^q} \|u\|^{q-p}_{L^{\infty}(B(x_0,R))} |(u-k)_+|^p\\
&= \frac{C[a]_{\alpha}}{R^{q-\alpha}} \|u\|^{q-p}_{L^{\infty}(B(x_0,R))} |(u-k)_+|^p\\
&\leq \frac{C [a]_{\alpha}}{R^{p}}\|u\|^{q-p}_{L^{\infty}(B(x_0,R))} |(u-k)_+|^p= C \left|\frac{(u-k)_+}{R}\right|^p,
\end{align*}
 where $C=C(C_1, p, q,\alpha, Q ,[a]_{\alpha}, \|u\|_{L^{\infty}(B(x_0,R))})$. The last inequality holds true by \eqref{pqcond} and Remark \ref{rem2poin}.
 Thus, \eqref{asc1} becomes
\begin{align*}
\int_{B(x_0,t)}g_{(u-k)_+}^p\, \dd \mu &\leq  C \left(\left(\frac{R}{s-t}\right)^p\int_{B(x_0,s)} \left|\frac{(u-k)_+}{R}\right|^p\, \dd\mu +C\left(\frac{R}{s-t}\right)^q\int_{B(x_0,s)} \left|\frac{(u-k)_+}{R}\right|^p\, \dd\mu\right)\\
& \leq C \left(\frac{R}{s-t}\right)^q \int_{B(x_0,s)} \left|\frac{(u-k)_+}{R}\right|^p\, \dd\mu.
 \end{align*}
\end{proof}

\section{Frozen functionals}\label{frozen}
In this section, we collect the necessary regularity results for quasiminima of the so-called frozen functionals used in subsequent sections. We consider functionals of the type

\begin{equation}\label{eq1frozen}
\int_{\Omega}H_0(g_u)\dd\mu=\int_{\Omega}(g_u^p+a_0g_u^q)\dd\mu,
\end{equation}
where $a_0\geq 0$ is a constant. Frozen functionals belong to the class of functionals introduced by the seminal work of Lieberman \cite{Lieberman}. One of the main difficulties when doing the careful analysis of the different phases of the double-phase functional can be solved by using frozen functionals theory, which is based on the fact that under appropriate conditions a quasiminimizer $u$ of functional \eqref{J} can be seen as a quasiminimizer of a certain frozen functional.\\

\noindent The related theory for this notion due to Lieberman \cite{Lieberman} can be used to obtain the needed estimates for regularity properties. For example, local regularity results were obtained in the Euclidean case in \cite{CM} and in the general context of a metric measure space in \cite{PCN2}. Here we prove the following generalization of Maz'ya's estimate, Proposition \ref{MazyaEstimate} (see also \cite[Theorem 6.21]{BB}), for frozen functionals. As far as we know this result is new even in the Euclidean case. 

\begin{theorem}[Maz'ya's type estimate for frozen functionals and small radii]\label{Mazya_Frozen} Let $u\in N^{1,1}(X)$ with $H_0(g_u)\in L^1(X)$, $B_r\subset X$ any ball with  radius $r\leq \min\lbrace 1, \frac{\diam(X)}{8}\rbrace$ and $S=\lbrace x\in B_{\frac{r}{2}}: u(x)=0\rbrace$. Then, there exists  $C=C(C_D,C_{\textrm{PI}}, C_1,p,q)>0$ and exponents $d_1\geq 1>d_2>0$, with $d_1=d_1(C_D,C_{\textrm{PI}},p,q)$ and $d_2=d_2(C_D,C_{\textrm{PI}},p, q)$, such that the following inequality holds
$$
\left(\dashint_{B_r} H_0\left(u\right)^{d_1}\dd \mu\right)^{\frac{1}{d_1}}\leq C\left(\frac{1}{\textrm{cap}_{pd_2}^{q/p}(S,B_r)}\int_{B_{2\lambda r}} H_0(g_u)^{d_2}\dd \mu\right)^{\frac{1}{d_2}},
$$
where $\lambda$ is the dilation constant in the $(1,p)$-Poincar\'e inequality.
\end{theorem}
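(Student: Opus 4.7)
The strategy is to reduce the claim to the classical single-exponent Maz'ya estimate (Proposition \ref{MazyaEstimate}), applied separately to the $p$- and $q$-growth parts of the frozen integrand $H_0(t)=t^p+a_0t^q$, and to glue the resulting bounds through the structural inequalities $H_0(u/r)^{d_1}\leq C((u/r)^{pd_1}+a_0^{d_1}(u/r)^{qd_1})$ (by convexity of $t\mapsto t^{d_1}$ for $d_1\geq 1$) and $H_0(g_u)^{d_2}\geq c(g_u^{pd_2}+a_0^{d_2}g_u^{qd_2})$ (by subadditivity of $t\mapsto t^{d_2}$ for $d_2<1$). The constant $a_0\geq 0$ plays a purely algebraic role in these inequalities, so the bulk of the analytic work reduces to two scalar Sobolev-Poincar\'e estimates.

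I first choose the exponents. Since $X$ admits a weak $(1,p)$-Poincar\'e inequality, Theorem \ref{kz} provides $\varepsilon>0$ such that $X$ supports a weak $(1,s)$-Poincar\'e inequality for every $s>p-\varepsilon$. I take $d_2\in(0,1)$ close enough to $1$ so that $pd_2>p-\varepsilon$; then a $(1,pd_2)$-Poincar\'e inequality holds on $X$, and a $(1,qd_2)$-Poincar\'e inequality holds automatically since $qd_2>pd_2$. By Remark \ref{rem2poin} we may enlarge $Q$ so that $pd_2<Q$, making the Sobolev conjugate $(pd_2)^*=Qpd_2/(Q-pd_2)$ finite; I set $d_1:=(pd_2)^*/p\geq 1$, and verify (using $q>p$) that $qd_1\leq (qd_2)^*$. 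This ensures that Proposition \ref{MazyaEstimate} applies for both pairs $(t,s)=(pd_1,pd_2)$ and $(qd_1,qd_2)$.

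Next, I apply Proposition \ref{MazyaEstimate} with $s=pd_2$ and $t=pd_1$ to bound $(\dashint_{B_r}u^{pd_1}\,\dd\mu)^{1/(pd_1)}$, then raise to the power $p$ and multiply through by $r^{-p}$ (permissible since $r\leq 1$) to obtain an estimate for $(\dashint_{B_r}(u/r)^{pd_1}\,\dd\mu)^{1/d_1}$ by a constant multiple of $(\textrm{cap}_{pd_2}(S,B_r)^{-1}\int_{B_{\lambda r}}g_u^{pd_2}\,\dd\mu)^{1/d_2}$. I then perform the analogous computation with $s=qd_2$ and $t=qd_1$ for the $q$-part; here the natural bound involves $\textrm{cap}_{qd_2}(S,B_r)$, which is converted into $\textrm{cap}_{pd_2}(S,B_r)$ through a H\"older-type capacity comparison for the admissible test functions in the definition of $\textrm{cap}_s$, together with the upper $Q$-Ahlfors regularity \eqref{upper Q-Ahlfors} and $r\leq 1$ to absorb excess radius factors.

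Finally, using Minkowski in $L^{1/d_1}$ (since $d_1\geq 1$) on the LHS split and the super-additivity of $t\mapsto t^{1/d_2}$ (since $1/d_2>1$) on the RHS split, I glue the two capacity-weighted estimates into the claimed inequality, passing from $g_u^{pd_2}+a_0^{d_2}g_u^{qd_2}$ back to $H_0(g_u)^{d_2}$ via the subadditivity inequality reversed with constant. The main obstacle is the delicate bookkeeping of factors of $r$ arising from the two applications of Proposition \ref{MazyaEstimate} with different exponent pairs, together with the comparison between $\textrm{cap}_{pd_2}$ and $\textrm{cap}_{qd_2}$; the hypothesis $r\leq 1$ is essential here, as it makes $r^{-q}\geq r^{-p}$ and so permits the losses from the $q$-part to be absorbed into the dominant $p$-part in the final estimate.
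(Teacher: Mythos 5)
Your proposal takes a genuinely different route from the paper's, and the route has concrete gaps.

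The paper does not decompose $H_0$ into its $p$- and $q$-pieces. It introduces an $H_0$-adapted quantity $\textrm{cap}_{d_2}^{H_0}(S,B_r)=\inf\bigl(\int_{B_r}H_0(g_v)^{d_2}\,\dd\mu\bigr)^{1/d_2}$ over admissible $v$, shows $(\textrm{cap}_{pd_2}(S,B_r))^{1/d_2}\leq\textrm{cap}_{d_2}^{H_0}(S,B_r)$, and then tests this quantity with $v=\eta(1-u/\bar u)$, where $\bar u=(\dashint_{B_r}H_0(u/r)^{d_1}\,\dd\mu)^{1/d_1}+1$ and $\eta$ is a Lipschitz cutoff. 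The analytic heart is a Sobolev--Poincar\'e inequality for the \emph{undecomposed} frozen functional $H_0$, cited from \cite[Lemma~3.1]{KNP}, applied to $u/\bar u$ and closed by absorbing $\bar u$. That inequality couples the two phases; it is not the concatenation of a $p$-Sobolev--Poincar\'e and a $q$-Sobolev--Poincar\'e, and the coupling is precisely the ingredient your split-and-glue scheme would need but does not have.

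Two steps in your proposal fail. (i)~\emph{Radius bookkeeping.} Proposition~\ref{MazyaEstimate} controls $(\dashint_{B_r}|u|^t\,\dd\mu)^{1/t}$ by $(\textrm{cap}_s(S,B_r)^{-1}\int_{B_{\lambda r}} g^s\,\dd\mu)^{1/s}$ with no free power of $r$. Converting the left side to $(\dashint_{B_r}(u/r)^{pd_1}\,\dd\mu)^{1/d_1}$ lands a factor $r^{-p}$ on the right; this is not a constant, it is $\geq1$ for $r\leq1$, and the right-hand side of the target inequality contains nothing that can absorb it. Your phrase ``permissible since $r\leq1$'' treats $r^{-p}$ as harmless, and the remark that ``$r\leq1$ makes $r^{-q}\geq r^{-p}$ and so permits the losses from the $q$-part to be absorbed into the dominant $p$-part'' has the logic inverted: $r\leq1$ makes both surpluses large, with the $q$-surplus the larger. (ii)~\emph{Capacity comparison.} After applying Proposition~\ref{MazyaEstimate} to the $q$-piece you hold $\textrm{cap}_{qd_2}(S,B_r)$ in the denominator and must trade it for $\textrm{cap}_{pd_2}(S,B_r)$. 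H\"older on test functions gives $\textrm{cap}_{pd_2}(S,B_r)\leq\mu(B_r)^{1-p/q}\,\textrm{cap}_{qd_2}(S,B_r)^{p/q}$, which after rearrangement lower-bounds $\textrm{cap}_{qd_2}$ by $\textrm{cap}_{pd_2}^{q/p}$ times a measure factor --- not the linear comparison $\textrm{cap}_{qd_2}\gtrsim\textrm{cap}_{pd_2}$ that your reduction needs. Moreover, pulling $a_0$ out of the $q$-piece and reinserting it into $H_0(g_u)^{d_2}$ leaves a stray factor $a_0^{1-1/d_2}$, unbounded as $a_0\to\infty$. Neither the upper $Q$-Ahlfors bound nor $r\leq1$ repairs any of this. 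The missing idea is to keep the frozen integrand intact and use the $H_0$-adapted capacity together with the $H_0$-Sobolev--Poincar\'e inequality, as the paper does.
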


\begin{proof}
By splitting $u$ into its positive and negative parts and considering them separately, we can assume that $u\geq 0$ in ${B_r}$. 
Recall that we are assuming that our space supports a $(1,s)$-Poincaré inequality with $1<s<p<q<s^*$. By Remark \ref{rem2poin} and Theorem \ref{sstars}, $X$ supports a $(s^*,s)$-Poincar\'e inequality.\\

 \noindent Let $\frac{s}{p}<d_2<1$, $d_2=d_2(C_{\textrm{PI}}, C_D,p,q)$, and $\frac{s^*}{q}\geq d_1\geq 1$, $d_1=d_1(C_{\textrm{PI}}, C_D,p,q)$. By Proposition \ref{MazyaEstimate}, the following two inequalities hold at the same time
\begin{equation}\label{eq2.3NEW}
    \left(\dashint_{B_r}\left\vert u\right\vert^{p d_1}\dd\mu\right)^{\frac{d_2}{d_1}}\leq \frac{C}{\textrm{cap}_{p d_2}(S, B_r)}\int_{B_{2\lambda r}}g_u^{p d_2}\dd\mu
\end{equation}
and
\begin{equation}\label{eq2.4NEW}
    \left(\dashint_{B_r}\left\vert u\right\vert^{q d_1}\dd\mu\right)^{\frac{d_2}{d_1}}\leq \frac{C}{\textrm{cap}_{q d_2}(S, B_r)}\int_{B_{2\lambda r}}g_u^{q d_2}\dd\mu
\end{equation}

\noindent Therefore, by \eqref{eq2.4NEW} and \eqref{eq2.5NEW} in Remark \ref{REMARKcapacities}, we have
\begin{align}\label{eq2.6NEW}
    \left(\dashint_{B_r}\left\vert u\right\vert^{q d_1}\dd\mu\right)^{\frac{d_2}{d_1}}&\leq \frac{C}{\textrm{cap}_{q d_2}(S, B_r)}\int_{B_{2\lambda r}}g_u^{q d_2}\dd\mu \nonumber\\
   &\leq \frac{C}{\textrm{cap}_{p d_2}^{q/p}(S, B_r)}\int_{B_{2\lambda r}}g_u^{q d_2}\dd\mu,
\end{align}
where $C=C(C_{\textrm{PI}}, C_1,p,q)$. Now, by \eqref{eq2.3NEW}, \eqref{eq2.6NEW}, Proposition \ref{Prop6.16BjornBjorn}, $\mu$ being upper $Q$-Alhfors regular and $r\leq 1$, we obtain the next chain of inequalities

\begin{align*}
    \Bigg(\dashint_{B_r}H_0^{d_1}\left(u\right)\dd\mu\Bigg)^{\frac{1}{d_1}}&\leq \left(\dashint_{B_r}2^{d_1-1}\left(\left\vert u\right\vert^{pd_1}+a_0^{d_1}\left\vert u\right\vert^{qd_1}\right)\dd\mu\right)^{\frac{1}{d_1}}\\
    &\leq 2\left(\left(\dashint_{B_r}\left\vert u\right\vert^{pd_1}\dd\mu\right)^{\frac{1}{d_1}}+a_0\left(\dashint_{B_r}\left\vert u\right\vert^{qd_1}\dd\mu\right)^{\frac{1}{d_1}}\right)\\
    &\leq C\Bigg( \left(\frac{1}{\textrm{cap}_{p d_2}(S,B_r)}\int_{B_{2\lambda r}}g_u^{pd_2}\dd\mu\right)^{\frac{1}{d_2}}+\left(\frac{1}{\textrm{cap}_{p d_2}^{q/p}(S,B_r)}\int_{B_{2\lambda r}}(a_0g_u^q)^{d_2}\dd\mu\right)^{\frac{1}{d_2}}\Bigg)\\
    &\leq C\Bigg( \left(\left(1+\frac{1}{\textrm{cap}_{p d_2}(S,B_r)}\right)\int_{B_{2\lambda r}}g_u^{pd_2}\dd\mu\right)^{\frac{1}{d_2}}\\
    &\qquad \qquad+\left(\left(1+\frac{1}{\textrm{cap}_{p d_2}(S,B_r)}\right)^{q/p}\int_{B_{2\lambda r}}(a_0g_u^q)^{d_2}\dd\mu\right)^{\frac{1}{d_2}}\Bigg)\\
    &\leq C\left(1+\frac{1}{\textrm{cap}_{p d_2}(S,B_r)}\right)^{\frac{q}{pd_2}}\left(\left(\int_{B_{2\lambda r}}g_u^{pd_2}\dd\mu\right)^{\frac{1}{d_2}}+\left(\int_{B_{2\lambda r}}(a_0g_u^q)^{d_2}\dd\mu\right)^{\frac{1}{d_2}}\right)\\
    &\leq C\left(\left(\frac{\textrm{cap}_{p d_2}(S,B_r)+1}{\textrm{cap}_{p d_2}(S,B_r)}\right)^{\frac{q}{pd_2}}\left(\int_{B_{2\lambda r}}g_u^{pd_2}\dd\mu+\int_{B_{2\lambda r}}(a_0g_u^q)^{d_2}\dd\mu\right)^{\frac{1}{d_2}}\right)\\
    &\leq C\left(\left(\frac{\left(\frac{C_D\mu(B_{\frac{r}{2}})}{\left(\frac{r}{2}\right)^{pd_2}}\right)+1}{\textrm{cap}_{p d_2}(S,B_r)}\right)^{q/p}\int_{B_{2\lambda r}}\left(g_u^{pd_2}+(a_0g_u^q)^{d_2}\right)\dd\mu\right)^{\frac{1}{d_2}}\\
     &\leq C\left(\left(\frac{C_DC_1\left(\frac{r}{2}\right)^{Q-pd_2}+1}{\textrm{cap}_{p d_2}(S,B_r)}\right)^{q/p}\int_{B_{2\lambda r}}\left(g_u^{pd_2}+(a_0g_u^q)^{d_2}\right)\dd\mu\right)^{\frac{1}{d_2}}\\
    &\leq C\left(\left(\frac{C+1}{\textrm{cap}_{p d_2}(S,B_r)}\right)^{q/p}\int_{B_{2\lambda r}}\left(g_u^{pd_2}+(a_0g_u^q)^{d_2}\right)\dd\mu\right)^{\frac{1}{d_2}}\\
    &\leq C\left(\frac{1}{\textrm{cap}_{pd_2}^{q/p}(S,B_R)}\int_{B_{2\lambda r}}H_0(g_u)^{d_2}\dd\mu\right)^{\frac{1}{d_2}}.
\end{align*}
Therefore,
$$
 \left(\dashint_{B_r}H_0^{d_1}\left(u\right)\dd\mu\right)^{\frac{1}{d_1}}\leq C\left(\frac{1}{\textrm{cap}_{pd_2}^{q/p}(S,B_R)}\int_{B_{2\lambda r}}H_0(g_u)^{d_2}\dd\mu\right)^{\frac{1}{d_2}},
$$
where $C=C(C_{\textrm{PI}}, C_D, C_1,p,q)$. 
\end{proof}

\section{Pointwise estimate on a boundary point}\label{Pointwise estimate on a boundary point}
This section is devoted to the proof of a pointwise estimate near a boundary point which has a key role in obtaining the sufficient condition for H\"older continuity (Theorem \ref{Theorem2.11}, Section \ref{Sufficient condition for Holder continuity}). \\

\noindent Let $w\in N^{1,1}(X)$ with $H(\cdot, g_w)\in L^1(X)$, and such that  $w - u \in N^{1,1}_0(\Omega)$.
We shall use the notation 
\begin{equation*}
	M(r,r_{0})=\Big(\esssup_{B(x_{0},r)} u -\esssup_{B(x_{0},r_{0})} w\Big)_{+},
\end{equation*}
where $0<r\leq r_{0}$, $a_{+}=\max\lbrace a,0\rbrace$, and $u \in N^{1,1}(X)$, with $H(\cdot, g_u)\in L^1(X)$ is a quasiminimizer on $\Omega$, with boundary values $w$.
Let also
\begin{equation}
	\gamma_{p,q}(s,r)=\frac{r^{-sq/p}\mu(B(x_{0},r))}{{\rm cap}_{s}^{q/p}(B(x_{0},r)\setminus\Omega, B(x_{0},2r))}.
\end{equation}

\noindent Now we state the main result of this section, a pointwise estimate for quasiminima near a boundary point. The proof of this estimate is based on a careful analysis of the phases.

\begin{proposition}[Pointwise estimate]\label{pointwise-estimate} Let $w\in N^{1,1}(X)$ with $H(\cdot, g_w)\in L^1(X)$. Let $u \in N^{1,1}(X)$, with $H(\cdot, g_u)\in L^1(X)$ a quasiminimizer on $\Omega$ with boundary values $w$. Then there exist $\lambda\geq 1$, $0<d_2<1$ and $C>0$ such that, for all $x_{0}\in\partial\Omega$ and $0<4\lambda r\leq r_{0}<\min\lbrace 1, \frac{\textrm{diam}(X)}{12 \lambda}\rbrace$, the next inequality holds
	\begin{equation*}
		M\left(\frac{r}{2}, r_{0}\right)\leq (1-2^{-n(r)-1})M(4\lambda r, r_{0}),
	\end{equation*}
	where $n(r)$ is a sufficiently large integer such that
	\begin{equation*}
		n(r)\geq C\gamma_{p,q}\left(pd_2,\frac{r}{2}\right)^{\frac{1}{1-d_2}}.
	\end{equation*}
 
\end{proposition}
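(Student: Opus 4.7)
The plan is to run a De Giorgi-type iteration on a geometric sequence of truncation levels, combining the Caccioppoli inequality (Lemma \ref{DeGiorgiLemma}) with the Maz'ya-type capacity estimate for frozen functionals (Theorem \ref{Mazya_Frozen}). I would set $M_{0}:=M(4\lambda r, r_{0})$ and $M:=\esssup_{B(x_{0}, r_{0})} w$, and assume $M_{0}>0$, as otherwise the conclusion is immediate. Then $u\le M$ on $X\setminus\Omega$ and $u\le M+M_{0}$ on $B(x_{0}, 4\lambda r)$. I introduce the level sequence $k_{j}:=M+M_{0}(1-2^{-j-1})$ for $j=0,1,\ldots,n$, with $n=n(r)$ to be determined. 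The truncations $v_{j}:=(u-k_{j})_{+}$ are nonnegative, bounded above by $M_{0}\,2^{-j-1}$ on $B(x_{0},4\lambda r)$, and vanish on $B(x_{0}, r)\cap(X\setminus\Omega)$, because there $u=w\le M\le k_{j}$; consequently $S:=B(x_{0}, r/2)\setminus\Omega$ is a natural zero set on which to apply the Maz'ya-type inequality.

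The next step is the freezing of the coefficient. Fix $a_{0}:=\inf_{B(x_{0}, 2\lambda r)} a$; by the H\"older estimate \eqref{aalpha}, the sharp exponent condition \eqref{pqcond}, the upper Ahlfors bound \eqref{upper Q-Ahlfors} and the local boundedness of $u$ given by Corollary \ref{BoundednessInBalls}, the oscillation of $a$ on $B(x_{0}, 2\lambda r)$ is small enough that the error term $(a(x)-a_{0})(v_{j}/r)^{q}$ can be absorbed into $C(v_{j}/r)^{p}$, exactly the mechanism of Lemma \ref{Almost standard Caccioppoli's inequality}. Hence $u$ behaves as a quasiminimizer of the frozen functional \eqref{eq1frozen} with $H_{0}(\xi)=\xi^{p}+a_{0}\xi^{q}$, and \eqref{CaccioppoliIneq} holds with $H_{0}$ in place of $H$ at the cost of an adjusted constant. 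Applying Theorem \ref{Mazya_Frozen} to $v_{j}$ on $B(x_{0},r/2)$ with zero set $S$, and chaining with the frozen Caccioppoli estimate on a concentric annulus, then yields
\begin{equation*}
\left(\dashint_{B(x_{0}, r/2)} H_{0}\!\left(\frac{v_{j}}{r}\right)^{\!d_{1}} \dd\mu\right)^{\!1/d_{1}}
\le C\left(\frac{1}{{\rm cap}_{pd_{2}}(S, B(x_{0}, r/2))}\int_{B(x_{0}, 2\lambda r)} H_{0}\!\left(\frac{v_{j-1}}{r}\right)^{\!d_{2}} \dd\mu\right)^{\!1/d_{2}},
\end{equation*}
where $v_{j-1}$ appears on the right because Caccioppoli pushes the radius up by a factor $2\lambda$ at the price of stepping down one level in the truncation hierarchy.

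The final step is the iteration itself. Setting
\begin{equation*}
\Phi_{j}:=\left(\dashint_{B(x_{0}, 2\lambda r)} H_{0}\!\left(\frac{v_{j}}{r}\right)^{\!d_{2}} \dd\mu\right)^{\!1/d_{2}},
\end{equation*}
and using that $v_{j}\ge k_{j+1}-k_{j}=M_{0}\,2^{-j-2}$ on $\{u>k_{j+1}\}$, the display above together with Chebyshev gives a nonlinear recursion whose constant carries the capacity factor $\gamma(pd_{2}, r/2)$. A standard iteration lemma of the Giusti--Stampacchia type (\cite[Lemma 7.1]{G}) then forces $\Phi_{n}=0$ as soon as $n$ satisfies the polynomial threshold $n\ge C\,\gamma(pd_{2}, r/2)^{1/(1-d_{2})}$ stated in the proposition. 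This means $v_{n}\equiv 0$ almost everywhere on $B(x_{0}, r/2)$, so $u\le k_{n}$ there and $M(r/2, r_{0})\le M_{0}(1-2^{-n-1})=(1-2^{-n-1})M(4\lambda r, r_{0})$, as claimed. The principal obstacle is the freezing step: one has to ensure that the perturbation produced by replacing $a(x)$ by $a_{0}$ is genuinely absorbable, which relies crucially on the sharpness of \eqref{pqcond} together with \eqref{aalpha} and \eqref{upper Q-Ahlfors}. A secondary technical nuisance is keeping careful track of how each radius dilation in the Maz'ya inequality interleaves with the shift $v_{j}\to v_{j-1}$ coming from Caccioppoli, which forces the iteration to be set up on two interleaved scales and is ultimately responsible for the specific exponent $1/(1-d_{2})$ on the capacity ratio.
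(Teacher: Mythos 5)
Your freezing step and your use of Theorem \ref{Mazya_Frozen} are the right ingredients, but the iteration mechanism you propose would not close, and you are missing the key structural device of the paper's argument.

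\textbf{Missing slab truncations and the summing trick.} The paper does not iterate with $v_j=(u-k_j)_+$ but with the \emph{slab} truncations $v_j=(u-k_j)_+-(u-k_{j+1})_+$. This choice is essential: the minimal $p$-weak upper gradient of $v_j$ is supported in the annular set $T(k_j,k_{j+1},2\lambda r)=S_{k_j,2\lambda r}\setminus S_{k_{j+1},2\lambda r}$, and these sets are pairwise disjoint in $j$. After chaining the Maz'ya estimate with the Caccioppoli/Chebyshev step, one obtains (for both the $(p,q)$- and $p$-phase cases)
\begin{equation*}
\left(\frac{\mu(S_{k_n,r})}{\mu(B_r)}\right)^{\frac{d_2}{1-d_2}}\leq C\,\gamma\!\left(pd_2,\tfrac{r}{2}\right)^{\frac{1}{1-d_2}}\frac{\mu\bigl(T(k_j,k_{j+1},2\lambda r)\bigr)}{\mu(B_{r/2})},\qquad j=0,\dots,n-1,
\end{equation*}
and the point is to \emph{sum over $j$}: since $\sum_{j=0}^{n-1}\mu(T(k_j,k_{j+1},2\lambda r))\leq\mu(B_{2\lambda r})$, the sum telescopes and yields $\mu(S_{k_n,r})/\mu(B_r)\leq Cn^{-(1/d_2-1)}\gamma^{1/d_2}$. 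Without the slab truncations there is nothing to telescope.

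\textbf{The iteration step is incorrect.} You claim that a Giusti-type iteration lemma forces $\Phi_n=0$ once $n$ exceeds the stated threshold. That is not what \cite[Lemma 7.1]{G} does: it gives $\Phi_j\to 0$ along a nested sequence of shrinking balls \emph{only under a smallness condition on the initial datum} $\Phi_0$, and it never produces exact vanishing at a finite index. Here $\Phi_0\approx H_0(M_0/r)$ is not small, and $\gamma$ can be arbitrarily large (the boundary may be thin), so the recursion $\Phi_{j+1}\lesssim B^j\gamma^{\,\mathrm{power}}\Phi_j^{1+\theta}$ simply does not converge. The paper avoids this entirely: after the density decay above, it applies a previously established $\esssup$-bound (the boundary weak Harnack / De Giorgi estimates from \cite{PCN2}) with $k_n$ as the reference level. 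Choosing $n\gtrsim\gamma^{1/(1-d_2)}$ makes the factor $\bigl(\mu(S_{k_n,r})/\mu(B_r)\bigr)^{1/p}$ at most $\tfrac12$, and since $\esssup_{B_r}u-k_n\le M_02^{-n}$, this gives $\esssup_{B_{r/2}}u\le k_n+M_02^{-n-1}$, i.e.\ $M(r/2,r_0)\le(1-2^{-n-1})M_0$. The threshold on $n$ comes from making the density small, not from an iteration lemma.

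\textbf{On the freezing.} Your unified absorption $(a(x)-a_0)(v_j/r)^q\lesssim(v_j/r)^p$ could in principle replace the paper's dichotomy between $a_0>C[a]_\alpha\mu(B)^{\alpha/Q}$ (comparability $a_0\le a\le 2a_0$, pure frozen argument) and the complementary $p$-phase case (almost-standard Caccioppoli plus a $pd_2$-Poincar\'e inequality, no Maz'ya-for-frozen needed). Your version would however force a dependence on $\|u\|_{L^\infty}$ throughout, and in the $(p,q)$-phase the paper's comparability route is both cleaner and constant-wise sharper. This is a stylistic choice and not the gap; the gap is the two points above.
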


\begin{proof}
We denote $M=M(4\lambda r, r_0)$, where $x_0\in\partial\Omega$ and $r_0>0$ are fixed. Without loss of generality, we can assume $0<M<+\infty$, otherwise the proof is finished. 

\noindent We define
$$
k_j=\esssup_{B(x_0,r_0)}w+M(1-2^{-j}),
$$
and
$$
v_j=(u-k_j)_+-(u-k_{j+1})_+.
$$
Notice that in $B(x_0,2\lambda r)\setminus\Omega$ we have $v_j=0$, where $\lambda$ is given by Theorem \ref{Mazya_Frozen}. Define $T(k,l,r)=S_{k,r}\setminus S_{l,r}$, then $g_uX_{T(k_j,k_{j+1},2\lambda r)}$ is a $p$-weak upper gradient of $v_j$ in $B(x_0,2\lambda r)$.\\

\noindent We define
$$
a_0=\inf_{x\in B(x_0, 8\lambda r)}a(x).
$$

\noindent \textbf{Case 1:} First assume that 
\begin{equation}\label{case1INFIMUM}
    a_0>C[a]_{\alpha}\mu(B(x_0,8\lambda r))^{\frac{\alpha}{Q}},
\end{equation}
Note that for every $x,y\in  B(x_0, 8\lambda r)$, we have
\begin{align*}
\delta_{\mu}(x,y)
&=\left(\mu(B(x,d(x,y)))+\mu(B(y,d(x,y)))\right)^{1/Q}\\
&\leq \left(\mu(B(x,16\lambda r))+\mu(B(y,16\lambda r))\right)^{1/Q}\\
&\leq \left(2\mu (B(x_0,24\lambda r))\right)^{1/Q}
\leq (2C_D^2\mu(B(x_0, 8\lambda r)))^{1/Q}\\
&= C\mu(B(x_0, 8\lambda r))^{1/Q}.
\end{align*}
By \eqref{case1INFIMUM} we obtain
\begin{align*}
2a_0&= 2a(x)-2\left(a(x)-a_0\right)\geq a(x)+a_0-2\left(a(x)-a_0\right)\\ 
&\geq a(x)+2[a]_{\alpha}(2C_D^2\mu(B(x_0, 8\lambda r)))^{\alpha/Q}-2\left(a(x)-a_0\right)\\
&\geq a(x)+2\sup_{\substack{x,y \in B(x_0, 8\lambda r)\\ x\neq y}} \frac{|a(x)-a(y)|}{\delta_{\mu}(x,y)^{\alpha}}(2C_D^2\mu(B(x_0, 8\lambda r)))^{\alpha/Q}-2\left(a(x)-a_0\right)\\
&\geq a(x)+2\sup_{x,y \in B(x_0, 8\lambda r)}|a(x)-a(y)|-2\left(a(x)-a_0\right)\\
&\geq a(x)+2\sup_{x,y \in B(x_0, 8\lambda r)}(a(x)-a(y))-2\left(a(x)-a_0\right)\\
&\geq a(x)+2a(x)-2\inf_{y \in B(x_0, 8\lambda r)}a(y)-2\left(a(x)-a_0\right)
=a(x),
\end{align*}
for every $x \in B(x_0, 8\lambda r)$.
On the other hand, we have $a(x)\geq\inf_{x\in B(x_0, 8\lambda r)} a(x) \geq a_0$ for every $x\in B(x_0, 8\lambda r)$.
This implies that 
\begin{equation}\label{eq2pointwiseProof}
a_0\le a(x)\le 2a_0\qquad \textrm{for every }x \in B(x_0, 8\lambda r).
\end{equation}
We define the Frozen functional
$$
H_0(x)=\vert z\vert^p+a_0\vert z\vert^q.
$$

\noindent By Theorem \ref{Mazya_Frozen} applied in $B(x_0,8\lambda r)$ and with $v_j$, there exist $d_1\geq 1> d_2>0$, with $d_1=d_1(C_{\textrm{PI}},C_D,p,q)$ and $d_2=d_2(C_{\textrm{PI}},C_D,p,q)$, such that by H\"older inequality we have
\begin{align*}
    \dashint_{B(x_0,r)}H_0\left(v_j\right)\dd\mu&\leq \left(\dashint_{B(x_0,r)}H_0\left(v_j\right)^{d_1}\dd\mu\right)^{\frac{1}{d_1}}\\
    &\leq C\left(\frac{1}{\textrm{cap}_{pd_2}^{q/p}(B\left(x_0,\frac{r}{2}\right)\setminus\Omega, B(x_0,r))}\int_{B(x_0, 2\lambda r)}H_0(g_{v_j})^{d_2}\dd\mu\right)^{\frac{1}{d_2}}\\
    &= C \left(\frac{r^{qd_2}\gamma_{p,q}\left(pd_2,\frac{r}{2}\right)}{\mu\left(B\left(x_0,\frac{r}{2}\right)\right)}\int_{B(x_0,2\lambda r)}H_0(g_{v_j})^{d_2}\dd\mu\right)^{\frac{1}{d_2}}\\
     &\leq C \left(\frac{r^{qd_2}\gamma_{p,q}\left(pd_2,\frac{r}{2}\right)}{\mu\left(B\left(x_0,\frac{r}{2}\right)\right)}\int_{T(k_j,k_{j+1},2\lambda r)}H_0(g_{u})^{d_2}\dd\mu\right)^{\frac{1}{d_2}}\\
     &=C\left(\frac{r^{qd_2}\gamma_{p,q}\left(pd_2,\frac{r}{2}\right)\mu(T(k_j,k_{j+1},2\lambda r))}{\mu\left(B\left(x_0,\frac{r}{2}\right)\right)}\right)^{\frac{1}{d_2}}\left(\dashint_{T(k_j,k_{j+1},2\lambda r)}H_0(g_u)^{d_2}\dd\mu\right)^{\frac{1}{d_2}}\\
     &\leq \frac{Cr^q\gamma_{p,q}\left(pd_2,\frac{r}{2}\right)^{\frac{1}{d_2}}\mu(T(k_j,k_{j+1},2\lambda r)^{\frac{1}{d_2}-1}}{\mu\left(B\left(x_0,\frac{r}{2}\right)\right)^{\frac{1}{d_2}}}\int_{T(k_j,k_{j+1},2\lambda r)}H_0(g_u)\dd\mu\\
     &\leq C\left(\frac{r^{qd_2}\gamma_{p,q}\left(pd_2,\frac{r}{2}\right)\mu(T(k_j,k_{j+1},2\lambda r)^{1-d_2}}{\mu\left(B\left(x_0,\frac{r}{2}\right)\right)}\right)^{\frac{1}{d_2}}\int_{S_{k_j,2\lambda r}}H_0(g_u)\dd\mu.
\end{align*}
Therefore, by the doubling property we obtain
\begin{equation}\label{1Prop2}
      \int_{B(x_0,r)}H_0\left(v_j\right)\dd\mu\leq C\left(\frac{\mu\left(B\left(x_0,\frac{r}{2}\right)\right)}{\mu(T(k_j,k_{j+1},2\lambda r)}\right)^{1-\frac{1}{d_2}}\left(r^{qd_2}\gamma_{p,q}\left(pd_2,\frac{r}{2}\right)\right)^{\frac{1}{d_2}}\int_{S_{k_j,2\lambda r}}H_0(g_u)\dd\mu.
\end{equation}
For the measure of the set $S_{k_{j+1},r}$ we have the following estimate
\begin{align*}
    \int_{B(x_0,r)}H_0\left(v_j\right)\dd\mu&\geq  H_0\left(k_{j+1}-k_j\right)\mu(S_{k_{j+1},r})\\
    &= H_0\left(2^{-j-1}M\right)\mu(S_{k_{j+1},r})\\
    &=H_0\left(\frac{M}{2^{j+1}}\right)\mu(S_{k_{j+1},r})
\end{align*}
Therefore,
\begin{equation}\label{2Prop2}
     \int_{B(x_0,r)}H_0\left(v_j\right)\dd\mu\geq H_0\left(\frac{M}{2^{j+1}}\right)\mu(S_{k_{j+1},r})
\end{equation}
Now, by \eqref{eq2pointwiseProof}, Lemma \ref{DeGiorgiLemma}, because the functional $H_0$ is increasing and $r\leq 1$, we achieve
\begin{align*}
    \int_{S_{k_j,2\lambda r}} H_0(g_u)\dd\mu&\leq \int_{S_{k_j,2\lambda r}} H(x,g_u)\dd\mu\leq \int_{B(x_0,2\lambda r)}H(x,g_{(u-k_j)_+})\dd\mu\\
     &\leq C \int_{B(x_0,4\lambda r)}H\left(x,\frac{(u-k_j)_+}{2\lambda r}\right)\dd\mu\leq 2C \int_{B(x_0,4\lambda r)}H_0\left(\frac{(u-k_j)_+}{ r}\right)\dd\mu\\
    &\leq C \int_{B(x_0,4\lambda r)}H_0\left(\frac{\left(\esssup_{B(x_0,4\lambda r)}u-k_j\right)_+}{ r}\right)\dd\mu\\
    &\leq C\int_{B(x_0,4\lambda r)}H_0\left(\frac{M}{r2^j}\right)\dd\mu= C \mu(B(x_0,4\lambda r))H_0\left(\frac{M}{r2^j}\right)\\
    &= C\mu(B(x_0,4\lambda r))\left(\left\vert\frac{M}{r2^j}\right\vert^p+a_0\left\vert\frac{M}{r2^j}\right\vert^q\right)\\
    &\leq  C\mu(B(x_0,4\lambda r))\frac{1}{r^q}H_0\left(\frac{M}{2^j}\right) .
\end{align*}
So,
\begin{equation}\label{3Prop2}
      \int_{S_{k_j,2\lambda r}} H_0(g_u)\dd\mu\leq C\mu(B(x_0,4\lambda r))\frac{1}{r^q}H_0\left(\frac{M}{2^j}\right),
\end{equation}
where $C=C(K,q)$.

\noindent By \eqref{2Prop2}, \eqref{1Prop2} and \eqref{3Prop2}, and the doubling property of the measure, we have
\begin{align*}
    H_0\left(\frac{M}{2^{j+1}}\right)\mu(S_{j+1,r})&\leq \int_{B(x_0,r)}H_0\left(v_j\right)\dd\mu\\
    &\leq C\left(\frac{\mu\left(B\left(x_0,\frac{r}{2}\right)\right)}{\mu(T(k_j,k_{j+1},2\lambda r)}\right)^{1-\frac{1}{d_2}}\left(r^{qd_2}\gamma_{p,q}\left(pd_2,\frac{r}{2}\right)\right)^{\frac{1}{d_2}}\int_{S_{k_j,2\lambda r}}H_0(g_u)\dd\mu\\
    &\leq  C\left(\frac{\mu\left(B\left(x_0,\frac{r}{2}\right)\right)}{\mu(T(k_j,k_{j+1},2\lambda r)}\right)^{1-\frac{1}{d_2}}r^{q}\gamma_{p,q}\left(pd_2,\frac{r}{2}\right)^{\frac{1}{d_2}}\mu(B(x_0,4\lambda r))\frac{1}{r^q}H_0\left(\frac{M}{2^j}\right)\\
    &\leq C\left(\frac{\mu\left(B\left(x_0,\frac{r}{2}\right)\right)}{\mu(T(k_j,k_{j+1},2\lambda r)}\right)^{1-\frac{1}{d_2}}\gamma_{p,q}\left(pd_2,\frac{r}{2}\right)^{\frac{1}{d_2}}\mu(B(x_0, r))H_0\left(\frac{M}{2^j}\right).
\end{align*}
By inequality (5.9) in \cite{PCN2}, we get
$$
\frac{1}{2^qq}H_0\left(\frac{M}{2^j}\right)\leq H_0\left(\frac{M}{2^{j+1}}\right).
$$
So, 
$$
\frac{1}{2^qq}H_0\left(\frac{M}{2^j}\right)\mu(S_{j+1,r})\leq C\left(\frac{\mu\left(B\left(x_0,\frac{r}{2}\right)\right)}{\mu(T(k_j,k_{j+1},2\lambda r)}\right)^{1-\frac{1}{d_2}}\gamma_{p,q}\left(pd_2,\frac{r}{2}\right)^{\frac{1}{d_2}}\mu(B(x_0, r))H_0\left(\frac{M}{2^j}\right).
$$
Therefore,
\begin{equation}\label{4Prop2}
  \frac{\mu(S_{k_{j+1},r})}{\mu(B(x_0,r))}\leq C\left(\frac{\mu(T(k_j,k_{j+1},2\lambda r)}{\mu\left(B\left(x_0,\frac{r}{2}\right)\right)}\right)^{\frac{1}{d_2}-1}  \gamma_{p,q}\left(pd_2,\frac{r}{2}\right)^{\frac{1}{d_2}},
\end{equation}
where $C=C(C_{\textrm{PI}}, C_D,C
_1,K,p,q)$ and $0<d_2<1$, $d_2=d_2(C_{\textrm{PI}},C_D,p,q)$.\\ 

\noindent If $n\geq j+1$ then $S_{k_{j+1},r}$ on the left-hand side of \eqref{4Prop2} can be replaced by $S_{k_n,r}$ and the inequality remains true. We get

$$
\left(\frac{\mu(S_{k_{n},r})}{\mu(B(x_0,r))}\right)^{\frac{d_2}{1-d_2}}\leq C\gamma_{p,q}\left(pd_2,\frac{r}{2}\right)^{\frac{1}{1-d_2}}\frac{\mu(T(k_j,k_{j+1},2\lambda r)}{\mu\left(B\left(x_0,\frac{r}{2}\right)\right)},
$$
summing up over $j=0,1,\cdots n-1$ and Lemma \ref{ineqDoubling}, yields
\begin{align*}
    n\left(\frac{\mu(S_{k_{n},r})}{\mu(B(x_0,r))}\right)^{\frac{d_2}{1-d_2}}&\leq C\gamma_{p,q}\left(pd_2,\frac{r}{2}\right)^{\frac{1}{1-d_2}}\frac{\mu(T(k_1,k_{n},2\lambda r)}{\mu\left(B\left(x_0,\frac{r}{2}\right)\right)}\\
    &\leq C\gamma_{p,q}\left(pd_2,\frac{r}{2}\right)^{\frac{1}{1-d_2}}\frac{\mu(B(x_0,2\lambda r))}{\mu\left(B\left(x_0,r\right)\right)}\\
    &\leq C\gamma_{p,q}\left(pd_2,\frac{r}{2}\right)^{\frac{1}{1-d_2}}\left(\frac{2\lambda r}{r}\right)^Q\\
    &\leq C\gamma_{p,q}\left(pd_2,\frac{r}{2}\right)^{\frac{1}{1-d_2}}.
\end{align*}

\noindent Therefore,

\begin{equation}\label{5Prop2}
    \frac{\mu(S_{k_{n},r})}{\mu(B(x_0,r))}\leq\frac{C}{n^{\frac{1}{d_2}-1}}\gamma_{p,q}\left(pd_2,\frac{r}{2}\right)^{\frac{1}{d_2}},
\end{equation}
with $C=C(C_{\textrm{PI}}, C_D, C_1, K,\lambda, p,q)$ and $0<d_2<1$, $d_2=d_2(C_{\textrm{PI}},C_D,p,q)$. \\

\noindent Since $u$ is a quasiminimizer with boundary data $w$ on $\Omega$, the Caccioppoli inequality
$$
\int_{S_{k,\rho_1}}H(x,g_u)\dd\mu\leq C\int_{B(x_0,\rho_2)}H\left(x,\frac{(u-k)_+}{\rho_2-\rho_1}\right)\dd\mu,
$$
holds for all $0<\rho_1<\rho_2$ and $k\geq \esssup_{B(x_0,\rho_2)}w$. In particular, by \eqref{eq2pointwiseProof}, for any $0<\rho_1<\rho_2\leq 8\lambda r$ and $k\geq\esssup_{B(x_0,8\lambda r)}w$, we have

\begin{align*}
  \int_{B(x_0,\rho_1)}H_0(g_{(u-k)_+})\dd\mu&\leq \int_{S_{k,\rho_1}}H(x,g_u)\dd\mu\\
&\leq C\int_{B(x_0,\rho_2)}H\left(x,\frac{(u-k)_+}{\rho_2-\rho_1}\right)\dd\mu\\
&\leq 2C \int_{B(x_0,\rho_2)}H_0\left(\frac{(u-k)_+}{\rho_2-\rho_1}\right)\dd\mu.  
\end{align*}
Furthermore, notice that $(u-k_n)$ also satisfies the previous inequality in the ball $B(x_0, 8\lambda r)$. Therefore, by an application of \cite[Theorem 6]{PCN2} with $\Omega=B(x_0,8\lambda r)$, $u=(u-k_n)_+$ and $t_+=p$, there exist $C=C(C_{\textrm{PI}}, C_D, p,q)$ such that 

\begin{align*}
    \esssup_{B\left(x_0,\frac{r}{2}\right)}(u-k_n)&\leq \esssup_{B\left(x_0,\frac{r}{2}\right)}(u-k_n)_+\\
    &\leq\frac{C}{\left(1-\frac{r/2}{r}\right)^{\frac{Q}{p}}}\left(\dashint_{B(x_0,r)}(u-k_n)_+^p\dd\mu\right)^{\frac{1}{p}}\\
&\leq C\left(\dashint_{B(x_0,r)}(u-k_n)_+^p\dd\mu\right)^{\frac{1}{p}}\\
&=C\left(\frac{1}{\mu(B(x_0,r))}\int_{S_{k_n,r}}(u-k_n)^p\dd\mu\right)^{\frac{1}{p}}\\
&\leq C\left(\frac{\mu(S_{k_n,r})}{\mu(B(x_0,r))}\right)^{\frac{1}{p}}\left(\esssup_{B(x_0,r)}u-k_n\right).
\end{align*}
Therefore, by \eqref{5Prop2}
\begin{align*}
 \esssup_{B\left(x_0,\frac{r}{2}\right)}u&\leq k_n+C\left(\frac{\mu(S_{k_n,r})}{\mu(B(x_0,r))}\right)^{\frac{1}{p}}\left(\esssup_{B(x_0,r)}u-k_n\right)\\
 &=\esssup_{B(x_0,r_0)}w+M(1-2^{-n})+C\left(\frac{\mu(S_{k_n,r})}{\mu(B(x_0,r))}\right)^{\frac{1}{p}}\left(\esssup_{B(x_0,r)}u-\esssup_{B(x_0,r_0)}w-M(1-2^{-n})\right)\\
 &\leq \esssup_{B(x_0,r_0)}w+M(1-2^{-n})+C\left(\frac{\mu(S_{k_n,r})}{\mu(B(x_0,r))}\right)^{\frac{1}{p}}M2^{-n}\\
 &\leq\esssup_{B(x_0,r_0)}w+M(1-2^{-n})+C\left(\frac{1}{n^{\frac{1}{d_2}-1}}\gamma_{p,q}\left(pd_2,\frac{r}{2}\right)^{\frac{1}{d_2}}\right)^{\frac{1}{p}}M2^{-n},
\end{align*}
with $C=C(C_{\textrm{PI}}, C_D,C_1, K, \lambda, p,q)$.\\

\noindent If $n\geq \left(2^{pd_2}\gamma_{p,q}\left(pd_2,\frac{r}{2}\right)\right)^{\frac{1}{1-d_2}}$, then $\left(\frac{1}{n^{\frac{1}{d_2}-1}}\gamma_{p,q}\left(pd_2,\frac{r}{2}\right)^{\frac{1}{d_2}}\right)^{\frac{1}{p}}\leq\frac{1}{2}$. Therefore, the last term on the right-hand side of the previous inequality is at most $2^{-n-1}M$. \\

\noindent Thus, for 

\begin{equation}\label{condition_n}
n\geq C\gamma_{p,q}\left(pd_2,\frac{r}{2}\right)^{\frac{1}{1-d_2}},
\end{equation} we have
\begin{align*}
     \esssup_{B\left(x_0,\frac{r}{2}\right)}u&\leq \esssup_{B(x_0,r_0)}w+M-2^{-n}M+2^{-n-1}M\\
     &= \esssup_{B(x_0,r_0)}w +M(1-2^{-n-1}).
\end{align*}

\noindent Finally, we get
\begin{equation}\label{conclusion1}
M\left(\frac{r}{2}, r\right)\leq (1-2^{-n-1})M,
\end{equation}
for $n$ satisfying \eqref{condition_n}. \textit{End Case 1.}\\

\noindent \textbf{Case 2:} Next we consider the case which is complementary to \eqref{case1INFIMUM}, that is, 
\begin{equation}\label{3C}
a_0=\inf_{x\in B(x_0, 8\lambda r)}a(x)\leq C[a]_{\alpha}\mu(B(x_0, 8\lambda r))^{\alpha/Q}.
\end{equation}
Notice that, for every $x\in  B(x_0, 8\lambda r)$ and $y \in B(x_0,  r)$, with $y\neq x$, we have
\begin{align}\label{notice}
    a(y)-a(x)&\leq \vert a(x)-a(y)\vert=\frac{\vert a(x)-a(y)\vert}{\delta_{\mu}(x,y)^{\alpha}}\delta_{\mu}(x,y)^{\alpha}\leq  [a]_{\alpha}\delta_{\mu}(x,y)^{\alpha}.
\end{align}
Note that, for every $x\in  B(x_0, 8\lambda r)$ and $y \in B(x_0,  r)$, with $y\neq x$, we have
\begin{align*}
\delta_{\mu}(x,y)
&=\left(\mu(B(x,d(x,y)))+\mu(B(y,d(x,y)))\right)^{1/Q}\\
&\leq \left(\mu(B(x,9\lambda r))+\mu(B(y,9\lambda r))\right)^{1/Q}\\
&\leq \left(2\mu (B(x_0,10\lambda r))\right)^{1/Q}
\leq C\mu(B(x_0, 8\lambda r))^{1/Q},
\end{align*}
where $C=C(C_D)$.
By \eqref{notice}, we get
\[
a(y)\leq a(x)+C[a]_{\alpha}\mu(B(x_0, 8\lambda r))^{\alpha/Q},
\]
where $C=C(C_D,\alpha)$.
By taking infimum over all $x\in 8\lambda B(x_0,  r)$, we obtain
\begin{align*}
 a(y)
 &\leq \inf_{x\in B(x_0, 8\lambda r)}a(x)+C[a]_{\alpha}\mu(B(x_0, 8\lambda r))^{\alpha/Q}\\
 &\leq 2[a]_{\alpha}C(\mu(B(x_0, 8\lambda r)))^{\alpha/Q}+C[a]_{\alpha}\mu(B(x_0, 8\lambda r))^{\alpha/Q}\\\
 &\leq C[a]_{\alpha}\mu(B(x_0, 8\lambda r))^{\alpha/Q},
\end{align*}
where $C=C(C_D,\alpha)$.
By taking supremum over $y\in B(x_0,  r)$, we conclude that
\begin{equation}\label{eq11POINTWISE}
\sup_{y\in B(x_0,  r)}a(y)
\leq C[a]_{\alpha}\mu(B(x_0, 8\lambda r))^{\alpha/Q}.
\end{equation}
Since $1<s<pd_2<p$, as a consequence of Theorem \ref{kz} and Remark \ref{rem1poin}, $X$ supports a weak $(pd_2,pd_2)$-Poincar\'e inequality. Therefore, by the weak $(pd_2,pd_2)$-Poincar\'e inequality, Proposition \ref{Prop6.16BjornBjorn}, the upper $Q$-Ahlfors inequality \eqref{upper Q-Ahlfors}, $r\leq 1$, Remark \ref{rem2poin}, the doubling property of $\mu$ and H\"older inequality, we get 
\begin{align*}
    \int_{B(x_0,r)}v_j^{pd_2}\dd\mu&\leq \frac{C\mu(B(x_0,r))}{\textrm{cap}_{pd_2}(B(x_0,\frac{r}{2})\setminus\Omega, B(x_0,r))}\int_{B(x_0,2\lambda r)}g_{v_j}^{pd_2}\dd\mu\\
    &\leq C\mu(B(x_0,r))\left(1+\frac{1}{\textrm{cap}_{pd_2}(B(x_0,\frac{r}{2})\setminus\Omega, B(x_0,r))}\right)\int_{B(x_0,2\lambda r)}g_{v_j}^{pd_2}\dd\mu\\
    &\leq C\mu(B(x_0,r))\left(1+\frac{1}{\textrm{cap}_{pd_2}(B(x_0,\frac{r}{2})\setminus\Omega, B(x_0,r))}\right)^{\frac{q}{p}}\int_{B(x_0,2\lambda r)}g_{v_j}^{pd_2}\dd\mu\\
    &=C\mu(B(x_0,r))\left(\frac{\textrm{cap}_{pd_2}(B(x_0,\frac{r}{2})\setminus\Omega, B(x_0,r))+1}{\textrm{cap}_{pd_2}(B(x_0,\frac{r}{2})\setminus\Omega, B(x_0,r))}\right)^{\frac{q}{p}}\int_{B(x_0,2\lambda r)}g_{v_j}^{pd_2}\dd\mu\\        
    &\leq C\mu(B(x_0,r))\left(\frac{\left(\frac{C_D\mu(B(x_0,\frac{r}{2}))}{\left(\frac{r}{2}\right)^{pd_2}}\right)+1}{\textrm{cap}_{pd_2}(B(x_0,\frac{r}{2})\setminus\Omega, B(x_0,r))}\right)^{\frac{q}{p}}\int_{B(x_0,2\lambda r)}g_{v_j}^{pd_2}\dd\mu\\  
    &\leq C\mu(B(x_0,r))\left(\frac{C_DC_1\left(\frac{r}{2}\right)^{Q-pd_2}+1}{\textrm{cap}_{pd_2}(B(x_0,\frac{r}{2})\setminus\Omega, B(x_0,r))}\right)^{\frac{q}{p}}\int_{B(x_0,2\lambda r)}g_{v_j}^{pd_2}\dd\mu\\  
    &\leq C\mu(B(x_0,r))\left(\frac{C+1}{\textrm{cap}_{pd_2}(B(x_0,\frac{r}{2})\setminus\Omega, B(x_0,r))}\right)^{\frac{q}{p}}\int_{B(x_0,2\lambda r)}g_{v_j}^{pd_2}\dd\mu\\  
    &=\frac{ C\mu(B(x_0,r))}{\textrm{cap}_{pd_2}^{q/p}(B(x_0,\frac{r}{2})\setminus\Omega, B(x_0,r))}\int_{B(x_0,2\lambda r)}g_{v_j}^{pd_2}\dd\mu\\  
    &\leq C\gamma_{p,q}(pd_2,\frac{r}{2})r^{qd_2}\int_{S_{k_j,2\lambda r}}g_u^{pd_2}\chi_{T(k_j,k_{j+1},2\lambda r)}\dd\mu\\
     &\leq C\gamma_{p,q}(pd_2,\frac{r}{2})r^{pd_2}\left(\int_{S_{k_j,2\lambda r}}g_u^{p}\dd\mu\right)^{d_2}\mu(T(k_j,k_{j+1},2\lambda r))^{1-d_2},
\end{align*}
with $C$ depending on $C_{\textrm{PI}}$, $C_D$ and $C_1$, therefore
\begin{equation}\label{eq12POINTWISE}
     \int_{B(x_0,r)}v_j^{pd_2}\dd\mu\leq C\gamma_{p,q}(pd_2,\frac{r}{2})r^{pd_2}\left(\int_{S_{k_j,2\lambda r}}g_u^{p}\dd\mu\right)^{d_2}\mu(T(k_j,k_{j+1},2\lambda r))^{1-d_2}.
\end{equation}
For the measure of the set $S_{k_{j+1},r}$ we have the following estimate
\begin{equation}\label{eq13POINTWISE}
 \int_{B(x_0,r)}v_j^{pd_2}\dd\mu\geq(k_{j+1}-k_j)^{pd_2}\mu(S_{k_{j+1}, r})=\frac{M^{pd_2}}{2^{pd_2(j+1)}}\mu(S_{k_{j+1},r}) .   
\end{equation}
Since $u$ is a quasiminimizer with boundary data $w$ in $\Omega$, by \eqref{eq11POINTWISE}, Lemma \ref{Almost standard Caccioppoli's inequality}, H\"older inequality and the doubling property, we have for $R=8\lambda r$, $s=2\lambda r$, $t=\lambda r$
\begin{align*}
    \int_{S_{k_j,\lambda r}}g_u^p\dd\mu&=\int_{B(x_0,\lambda r)}g_{(u-k_j)_+}^p\dd\mu\leq C\left(\left(\frac{8\lambda r}{\lambda r}\right)\int_{B(x_0,2\lambda r)}\left\vert\frac{(u-k_j)_+}{8\lambda r}\right\vert^p\dd\mu\right)\\
    &\leq \frac{C}{r^p}\int_{B(x_0,2\lambda r)}(u-k_j)_+^p\dd\mu\leq \frac{C}{r^p}\int_{B(x_0,2\lambda r)}\left(\esssup_{B(x_0,2\lambda r)}u-k_j\right)_+^p\dd\mu\\
    &\leq C\left(\frac{M}{r2^j}\right)^p\mu(B(x_0,2\lambda r)).
\end{align*}
Therefore,
\begin{equation}\label{eq14POINTWISE}
\int_{S_{k_j,\lambda r}}g_u^p\dd\mu\leq C\left(\frac{M}{r2^j}\right)^p\mu(B(x_0,2\lambda r)).    
\end{equation}
By \eqref{eq13POINTWISE}, \eqref{eq12POINTWISE} and \eqref{eq14POINTWISE}, we obtain
\begin{align*}
    \mu(S_{k_{j+1},r})\frac{M^{pd_2}}{2^{pd_2(j+1)}}&\leq \int_{B(x_0,r)}v_j^{pd_2}\dd\mu\\
    &\leq C\gamma_{p,q}(pd_2,\frac{r}{2})r^{pd_2}\left(\int_{S_{k_j,2\lambda r}}g_u^{pd_2}\dd\mu\right)^{d_2}\mu(T(k_j,k_{j+1},2\lambda r))^{1-d_2}\\
    &\leq  C\gamma_{p,q}(pd_2,\frac{r}{2})r^{pd_2}\frac{CM^{pd_2}}{r^{pd_2}2^{jpd_2}}\mu(B(x_0,2\lambda r))^{d_2}\mu(T(k_j,k_{j+1},2\lambda r))^{1-d_2}\\
    &=  C\gamma_{p,q}(pd_2,\frac{r}{2})\frac{CM^{pd_2}}{2^{jpd_2}}\mu(B(x_0,2\lambda r))^{d_2}\mu(T(k_j,k_{j+1},2\lambda r))^{1-d_2}.
\end{align*}
So,
\begin{equation}\label{eq15POINTWISE}
  \frac{\mu(S_{k_{j+1},r})}{\mu(B(x_0,r))}\leq C\gamma_{p,q}(pd_2,\frac{r}{2})\left(\frac{\mu(T(k_j,k_{j+1},2\lambda r))}{\mu(B(x_0,r))}\right)^{1-d_2}.
\end{equation}
As before, if $n\geq j+1$, then \eqref{eq15POINTWISE} holds true when the set $S_{k_{j+1},r}$ on the left part of the inequality is replaced by $S_{k_n,r}$. Thus, by Lemma \ref{ineqDoubling} 

\begin{align*}
    n\left( \frac{\mu(S_{k_{n},r})}{\mu(B(x_0,r))}\right)^{\frac{1}{1-d_2}}&\leq  C\gamma_{p,q}(pd_2,\frac{r}{2})^{\frac{1}{1-d_2}}\left(\frac{\mu(T(k_j,k_{j+1},2\lambda r))}{\mu(B(x_0,r))}\right)\\
    &\leq C\gamma_{p,q}(pd_2,\frac{r}{2})^{\frac{1}{1-d_2}}\left(\frac{\mu(B(x_0,2\lambda r))}{\mu(B(x_0,r))}\right)\\
    &\leq  C\gamma_{p,q}(pd_2,\frac{r}{2})^{\frac{1}{1-d_2}}\left(\frac{2\lambda r}{r}\right)^{Q}.
\end{align*}
Meaning,
\begin{equation}\label{eq16POINTWISE}
    \frac{\mu(S_{k_{n},r})}{\mu(B(x_0,r))}\leq\frac{C}{n^{1-d_2}}\gamma_{p,q}(pd_2,\frac{r}{2}).
\end{equation}
Again, since $u$ is a quasiminimizer with boundary data $w$ in $\Omega$, then by Lemma \ref{DeGiorgiLemma}, in particular, $u$ satisfies the Caccioppoli inequality in the ball $B(x_0,8\lambda r)$. Since \eqref{eq11POINTWISE} holds, by an application of \cite[Theorem 11]{PCN2} with $\Omega=B(x_0,8\lambda r)$, $u=(u-k_n)_+$ and $t_+=p$, there exists $C=C(\data, C_1, \Vert u\Vert_{L^\infty(B(x_0,r))})$ such that

\begin{align*}
    \esssup_{B\left(x_0,\frac{r}{2}\right)}(u-k_n)&\leq \esssup_{B\left(x_0,\frac{r}{2}\right)}(u-k_n)_+\\
    &\leq\frac{C}{\left(1-\frac{r/2}{r}\right)^{\frac{Q}{p}}}\left(\dashint_{B(x_0,r)}(u-k_n)_+^p\dd\mu\right)^{\frac{1}{p}}\\
&\leq C\left(\dashint_{B(x_0,r)}(u-k_n)_+^p\dd\mu\right)^{\frac{1}{p}}\\
&=C\left(\frac{1}{\mu(B(x_0,r))}\int_{S_{k_n,r}}(u-k_n)^p\dd\mu\right)^{\frac{1}{p}}\\
&\leq C\left(\frac{\mu(S_{k_n,r})}{\mu(B(x_0,r))}\right)^{\frac{1}{p}}\left(\esssup_{B(x_0,r)}u-k_n\right).
\end{align*}

\noindent Therefore, by \eqref{eq16POINTWISE} 
\begin{align*}
 \esssup_{B\left(x_0,\frac{r}{2}\right)}u&\leq k_n+C\left(\frac{\mu(S_{k_n,r})}{\mu(B(x_0,r))}\right)^{\frac{1}{p}}\left(\esssup_{B(x_0,r)}u-k_n\right)\\
 &=\esssup_{B(x_0,r_0)}w+M(1-2^{-n})+C\left(\frac{\mu(S_{k_n,r})}{\mu(B(x_0,r))}\right)^{\frac{1}{p}}\left(\esssup_{B(x_0,r)}u-\esssup_{B(x_0,r_0)}w-M(1-2^{-n})\right)\\
 &\leq \esssup_{B(x_0,r_0)}w+M(1-2^{-n})+C\left(\frac{\mu(S_{k_n,r})}{\mu(B(x_0,r))}\right)^{\frac{1}{p}}M2^{-n}\\
 &\leq\esssup_{B(x_0,r_0)}w+M(1-2^{-n})+C\left(\frac{1}{n^{1-d_2}}\gamma_{p,q}\left(pd_2,\frac{r}{2}\right)\right)^{\frac{1}{p}}M2^{-n},
\end{align*}

\noindent If $n\geq \left(2^p\gamma_{p,q}\left(pd_2,\frac{r}{2}\right)\right)^{\frac{1}{1-d_2}}$, then $\left(\frac{1}{n^{1-d_2}}\gamma_{p,q}\left(pd_2,\frac{r}{2}\right)\right)^{\frac{1}{p}}\leq\frac{1}{2}$. Therefore, the last term on the right-hand side of the previous inequality is at most $2^{-n-1}M$. \\

\noindent Thus, for 

\begin{equation}\label{condition_n2}
n\geq C\gamma_{p,q}\left(pd_2,\frac{r}{2}\right)^{\frac{1}{1-d_2}},
\end{equation} we have
\begin{align*}
     \esssup_{B\left(x_0,\frac{r}{2}\right)}u&\leq \esssup_{B(x_0,r_0)}w+M-2^{-n}M+2^{-n-1}M\\
     &= \esssup_{B(x_0,r_0)}w +M(1-2^{-n-1}).
\end{align*}

\noindent Finally, we get
\begin{equation}\label{conclusion2}
M\left(\frac{r}{2}, r\right)\leq (1-2^{-n-1})M,
\end{equation}
for $n$ satisfying \eqref{condition_n2}. \textit{End Case 2.}\\

\noindent Notice that equations \eqref{condition_n} and \eqref{condition_n2} are the same, and we can uniform conditions \eqref{conclusion1} and \eqref{conclusion2}.\\

\noindent Therefore, combining both cases, we obtain that by choosing the smallest integer such that  $n\geq C\gamma_{p,q}\left(pd_2,\frac{r}{2}\right)^{\frac{1}{1-d_2}}$,  we finish the proof.
\end{proof}

\section{Sufficient condition for H\"older continuity}\label{Sufficient condition for Holder continuity}
This last section contains the main result of the present work, Theorem \ref{Theorem2.12}. It is worth mentioning that after obtaining the pointwise estimate on a boundary point, see Proposition \ref{pointwise-estimate}, the arguments used in this section are inspired by techniques from the literature, particularly \cite{B, NP}, but adapted to our setting. For completeness, we provide full proofs below. We start with the following Theorem, a pointwise estimate for quasiminima near a boundary point, see \cite{B, Maz, NP}. 
\begin{theorem}\label{Theorem2.11}
	Let $w\in N^{1,1}(X)$ with $H(x,g_w)\in L^1(X)$. Let $u \in N^{1,1}(X)$, with $H(x,g_u)\in L^1(X)$ a quasiminimizer on $\Omega$ with boundary values $w$.  Then there exist $C_{0},C_{1}>0$ such that
	\begin{equation}
		M(\rho,r_{0})\leq C_{1}M(r_{0},r_{0})\exp\left(-\frac{1}{4}\int_{\rho}^{r_{0}}\exp\left(-C_{0}\gamma_{p,q}(pd_2,r)^{\frac{1}{1-d_2}}\right)\frac{\, d r}{r}\right).
	\end{equation}
 \end{theorem}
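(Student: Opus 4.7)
The plan is to iterate Proposition \ref{pointwise-estimate} along a geometric sequence of radii, convert the resulting product into the exponential of a sum via $1-x\leq e^{-x}$, compare that sum to the Wiener-type integral appearing in the statement, and finally pass back to an arbitrary $\rho$ using monotonicity of $M(\cdot,r_0)$.

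\textbf{Iteration along a geometric grid.} Set $\sigma=8\lambda$ and $\rho_k=r_0\sigma^{-k}$ for $k\geq 0$. For every $k$ such that the smallness hypothesis of Proposition \ref{pointwise-estimate} is met, I apply it with $r=\rho_k/(4\lambda)$; since then $r/2=\rho_{k+1}$ and $4\lambda r=\rho_k$, this yields
\[
M(\rho_{k+1},r_0)\leq \bigl(1-2^{-n_k-1}\bigr)\,M(\rho_k,r_0),
\]
where I choose $n_k$ as the least integer satisfying $n_k\geq C\gamma(pd_2,\rho_{k+1})^{1/(1-d_2)}$. Hence $n_k\leq C\gamma(pd_2,\rho_{k+1})^{1/(1-d_2)}+1$ and
\[
2^{-n_k-1}\geq\tfrac{1}{4}\exp\bigl(-C_0\gamma(pd_2,\rho_{k+1})^{1/(1-d_2)}\bigr),
\]
with $C_0=C\log 2$.

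\textbf{From the discrete product to the continuous integral.} Iterating the one-step estimate from $k=0$ up to some $k\geq 1$, and using $1-x\leq e^{-x}$, I obtain
\[
M(\rho_k,r_0)\leq M(r_0,r_0)\exp\Bigl(-\tfrac{1}{4}\sum_{j=0}^{k-1}\exp\bigl(-C_0\gamma(pd_2,\rho_{j+1})^{1/(1-d_2)}\bigr)\Bigr).
\]
Because $\{\rho_k\}$ is geometric with ratio $\sigma$, one has $\int_{\rho_{j+1}}^{\rho_j}\frac{\dd r}{r}=\log\sigma$. Combining the monotonicity of $\mathrm{cap}_{pd_2}(\cdot,B_{2r})$ in $r$ with Lemma \ref{ineqDoubling}, I establish a scale-comparison estimate of the form $\gamma(pd_2,r)\leq C\gamma(pd_2,\rho_{j+1})$ for every $r\in[\rho_{j+1},\rho_j]$, so that, after possibly enlarging $C_0$, each term of the sum above dominates $(\log\sigma)^{-1}\int_{\rho_{j+1}}^{\rho_j}\exp(-C_0\gamma(pd_2,r)^{1/(1-d_2)})\frac{\dd r}{r}$. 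Summing in $j$ and absorbing $\log\sigma$ into the constants produces exactly the claimed integral in the exponent.

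\textbf{Arbitrary $\rho$ and main obstacle.} For a general $\rho\in(0,r_0]$, choose $k$ with $\rho_{k+1}\leq\rho\leq\rho_k$; since $M(\cdot,r_0)$ is non-decreasing in its first argument, $M(\rho,r_0)\leq M(\rho_k,r_0)$, while extending the lower integration limit from $\rho$ to $\rho_k$ costs at most a multiplicative constant (the integrand is bounded by $1$ on an interval of logarithmic length at most $\log\sigma$), absorbed into $C_1$. The same idea, combined with the trivial bound $M(\rho,r_0)\leq M(r_0,r_0)$, takes care of the finitely many initial scales at which Proposition \ref{pointwise-estimate} does not yet apply because of its upper bound on admissible radii. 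The delicate step is the scale comparison for $\gamma(pd_2,\cdot)$: since this functional is not a priori monotone or continuous in the radius, comparing $\gamma$ at a generic $r\in[\rho_{j+1},\rho_j]$ with the grid value at $\rho_{j+1}$ relies on the monotonicity of relative capacity in the outer ball together with the doubling of $\mu$, and this is where most of the technical work actually goes; once this scale-comparison is secured, the rest of the argument is Riemann-sum bookkeeping and constant renaming.
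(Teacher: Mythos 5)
Your iteration on a fixed geometric grid and the conversion of the product into $\exp(-\sum)$ are the same first step as the paper, but the way you try to pass from the discrete sum to the Wiener-type integral diverges from the paper's argument, and that is where your proof breaks down.

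The paper never compares $\gamma(pd_2,\cdot)$ at two different radii. Instead, it partitions $(0,r_0)$ into two disjoint families $I_1,I_2$ of intervals (so that applying the pointwise estimate, which jumps by a factor $8\lambda$, stays within one family), and then within each interval it \emph{chooses} a radius $r_i$ by a mean value argument so that $\omega(r_i)\geq\int_{\text{interval}}\omega(r)\,dr/r$. The pointwise estimate is then applied at those chosen $r_i$, not at the grid points. This sidesteps entirely the need to relate $\gamma$ at a grid point to $\gamma$ elsewhere in the interval.

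Your proposal, by contrast, applies the pointwise estimate at the grid points $\rho_{j+1}$ and then tries to dominate $\int_{\rho_{j+1}}^{\rho_j}\exp(-C_0\gamma(pd_2,r)^{1/(1-d_2)})\,dr/r$ by $\omega(\rho_{j+1})$. Since $x\mapsto\exp(-C_0 x^{1/(1-d_2)})$ is decreasing, this requires a \emph{lower} bound $\gamma(pd_2,r)\geq c\,\gamma(pd_2,\rho_{j+1})$ uniformly over $r\in[\rho_{j+1},\rho_j]$ (you actually wrote the inequality in the opposite direction, $\gamma(pd_2,r)\leq C\gamma(pd_2,\rho_{j+1})$, which goes the wrong way regardless). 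But no such comparison holds in general. In $\gamma(s,r)=r^{-s}\mu(B_r)/\mathrm{cap}_s(B_r\setminus\Omega,B_{2r})$, the numerator is controlled across a bounded ratio of radii by doubling, but the denominator involves the capacity of $B_r\setminus\Omega$ relative to $B_{2r}$, where both the inner set and the outer ball vary with $r$. Set monotonicity of capacity says $\mathrm{cap}_s(B_{\rho_{j+1}}\setminus\Omega,B)\leq\mathrm{cap}_s(B_r\setminus\Omega,B)$ for a fixed outer ball, which combined with the outer-ball comparison goes the wrong way for what you need; more to the point, the complement $X\setminus\Omega$ can be a capacity-null set at one scale and fat at a nearby scale, so $\gamma(pd_2,\cdot)$ is neither monotone nor slowly varying across a single dyadic interval. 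Catching exactly this oscillation is the whole point of the Wiener-type integral, and it is why the paper chooses the radius within each interval rather than fixing it in advance. Your "scale-comparison estimate" is therefore not a technical detail to be filled in; it is false, and the mean value selection of radii is the missing idea.

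Your handling of general $\rho$ and of the finitely many initial scales is fine and matches the spirit of the paper's argument.
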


\begin{proof}
	Let $M(r)=M(r,r_{0})$, where $r_{0}>0$ is fixed. It is not restrictive to suppose that $0< M(r_{0}) < +\infty$.
	We consider $C$ and $n(r)$ as in Proposition \ref{pointwise-estimate}, $C_{0}=C\log 2$ and
	\begin{equation*}
		\zeta(r)=\exp\left(-C_{0}\gamma_{p,q}(pd_2,r)^{\frac{1}{1-d_2}}\right)=2^{-n(2r)}.
	\end{equation*}
	We partition $(0,r_{0})=I_{1}\cup I_{2}$ with $I_{1}\cap I_{2}=\emptyset$, where
	\begin{equation*}
		I_{m}=\bigcup_{i=1}^{+\infty}\left[(8\lambda)^{m-2i-1}r_{0}, (8\lambda)^{m-2i}r_{0}\right), \quad \mbox{for $m=1,2$.} 
	\end{equation*}
	It follows, 
	\begin{equation}\label{eq1theo2.11}
		\int_{\rho}^{r_{0}}\zeta(r)\frac{dr}{r}\leq 2\int_{(\rho,r_{0})\cap I_{m}}\zeta(r)\frac{dr}{r},
	\end{equation} for $m=1$ or $m=2$. For each $i \in \mathbb{N}$, we define $r_{i}\in \left[(8\lambda)^{m-2i-1}r_{0}, (8\lambda)^{m-2i}r_{0}\right)$  satisfying
	\begin{equation*}
		\zeta(r_{i})\geq\frac{1}{(8\lambda)^{m-2i-1}r_{0}}\int_{(8\lambda)^{m-2i-1}r_{0}}^{(8\lambda)^{m-2i}r_{0}}\zeta(r)dr\geq \int_{(8\lambda)^{m-2i-1}r_{0}}^{(8\lambda)^{m-2i}r_{0}}\zeta(r)\frac{dr}{r}.
	\end{equation*}
	Since $\zeta(r)\leq 1$ for all $r$, we obtain
	\begin{equation}\label{eq2theo2.11}
		\int_{(\rho,r_{0})\cap I_{m}}\zeta(r)\frac{dr}{r}\leq\sum_{\rho\leq r_{i}<\frac{r_{0}}{8\lambda}}\zeta(r_{i})+C.
	\end{equation}
	By Proposition \ref{pointwise-estimate}, we get
	\begin{align*}
		M((8\lambda)^{m-2i-1}r_{0})&\leq M(r_{i})\leq M(8\lambda r_{i})(1-2^{-n(2r_{i})-1})\\
		&\leq M((8\lambda)^{m-2i+1}r_{0})\left(1-\frac{\zeta(r_{i})}{2}\right), \quad \mbox{for $i\geq 1$.}
	\end{align*}
	By an iteration process and the trivial inequality $1-t\leq \exp(-t)$, we deduce
	\begin{equation*}
		M(\rho)\leq M(r_{0})\exp\left(-\frac{1}{2}\sum_{\rho\leq r_{i}<\frac{r_{0}}{8\lambda}}\zeta(r_{i})\right),  \quad \mbox{for $0<\rho<r_{0}$.}
	\end{equation*}
	Furthermore,
	\begin{equation*}
		-\frac{1}{2}\sum_{\rho\leq r_{i}<\frac{r_{0}}{8\lambda}}\zeta(r_{i})\leq C-\frac{1}{2}\int_{(\rho,r_{0})\cap I_{m}}\zeta(r)\frac{dr}{r}\leq C-\frac{1}{4}\int_{\rho}^{r_{0}}\zeta(r)\frac{dr}{r}.
	\end{equation*}
	Therefore,
	\begin{equation*}
		\exp\left(-\frac{1}{2}\sum_{\rho\leq r_{i}<\frac{r_{0}}{8\lambda}}\zeta(r_{i})\right)\leq C_{1}\exp\left(-\frac{1}{4}\int_{\rho}^{r_{0}}\exp\left(-C_{0}\gamma_{p,q}(pd_2,r)^{\frac{1}{1-d_2}}\right)\frac{dr}{r}\right).
	\end{equation*}
\end{proof}
\noindent Finally, Theorem  \ref{Theorem2.11} implies the sufficient condition for H\"older continuity of quasiminima of double-phase functionals at a boundary point. 

\begin{theorem}\label{Theorem2.12} Let $w\in N^{1,1}(X)$ with $H(x,g_w)\in L^1(X)$. Let $u \in N^{1,1}(X)$, with $H(x,g_u)\in L^1(X)$ a quasiminimizer on $\Omega$ with boundary values $w$. 
	If $w$ is a H\"older continuous function at $x_{0}\in\partial\Omega$, then there exists $C_0>0$ such that
	\begin{equation*}
		\liminf_{\rho\rightarrow 0}\frac{1}{\vert \log \rho\vert}\int_{\rho}^{1}\exp\left(-C_0 \gamma_{p,q}(pd_2,r)^{\frac{1}{1-d_2}}\right)\frac{dr}{r}>0.
	\end{equation*}
	Thus $u$ is H\"older continuous at $x_{0}$.
\end{theorem}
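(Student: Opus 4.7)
The plan is to combine Theorem \ref{Theorem2.11}, applied symmetrically to $u$ and $-u$, with the liminf hypothesis and the H\"older regularity of $w$ to produce polynomial decay of the oscillation of $u$ at $x_0$. After translating by $w(x_0)$ (the functional and the quasiminimizer property are invariant under addition of constants) I assume $w(x_0)=0$, so the H\"older hypothesis reads $\esssup_{B(x_0,r)}|w|\le C_w r^\tau$ for some $\tau\in(0,1]$ and $r$ small. By Remark \ref{Proposition 3.3KS}, $-u$ is also a quasiminimizer with boundary datum $-w$; by Corollary \ref{BoundednessInBalls}, both $u$ and $w$ are essentially bounded on a fixed neighborhood of $x_0$. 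Applying Theorem \ref{Theorem2.11} to $u$ and to $-u$ and using these bounds to control $M(r_0,r_0)$ uniformly in $r_0$, I obtain for all $0<\rho<r_0\le r^*$ that
\begin{equation*}
\esssup_{B(x_0,\rho)} u \le \esssup_{B(x_0,r_0)} w + C\,E(\rho,r_0), \qquad \essinf_{B(x_0,\rho)} u \ge \essinf_{B(x_0,r_0)} w - C\,E(\rho,r_0),
\end{equation*}
where $E(\rho,r_0):=\exp\bigl(-\tfrac14\int_\rho^{r_0}\exp(-C_0\gamma(pd_2,r)^{1/(1-d_2)})\tfrac{dr}{r}\bigr)$ and $C$ does not depend on $\rho,r_0$.

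The key step is to convert the liminf hypothesis into polynomial decay of $E$ via a careful scale choice. Let $L>0$ be the value of the liminf, so that $\int_\rho^1 \exp(-C_0\gamma(pd_2,r)^{1/(1-d_2)})\tfrac{dr}{r}\ge \tfrac{L}{2}|\log\rho|$ for $\rho$ small. Fix an integer $m>2/L$ and couple the scales by $r_0:=\rho^{1/m}$. Since the integrand lies in $(0,1]$, $\int_{r_0}^1 \exp(\cdots)\tfrac{dr}{r}\le|\log r_0|$, and therefore
\begin{equation*}
\int_\rho^{r_0}\exp(\cdots)\tfrac{dr}{r}\ge\tfrac{L}{2}|\log\rho|-|\log r_0|=\tfrac{Lm-2}{2}|\log r_0|,
\end{equation*}
which yields $E(\rho,r_0)\le r_0^{(Lm-2)/8}$, a genuine positive power of $r_0$.

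Plugging this back together with $\esssup_{B(x_0,r_0)}|w|\le C_w r_0^\tau$, I conclude
\begin{equation*}
\bigl|\esssup_{B(x_0,\rho)} u\bigr|\le C_w r_0^\tau + C r_0^{(Lm-2)/8}\le C' r_0^\beta=C'\rho^{\beta/m},
\end{equation*}
with $\beta:=\min(\tau,(Lm-2)/8)>0$, and analogously $\bigl|\essinf_{B(x_0,\rho)}u\bigr|\le C'\rho^{\beta/m}$. Setting $u(x_0):=w(x_0)$, this gives $\osc(u,B(x_0,\rho))\le C\rho^{\beta/m}$ near $x_0$, i.e.\ H\"older continuity of $u$ at $x_0$ with exponent $\beta/m$. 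The main obstacle is the scale-balancing step: the liminf only governs the tail $\int_\rho^1$, so a direct substitution into the shorter $\int_\rho^{r_0}$ would lose the contribution over $[r_0,1]$; the scaling $r_0=\rho^{1/m}$ with $m>2/L$ precisely compensates this loss, converting the hypothesis into the clean polynomial decay of $E$ that, together with the H\"older control on $w$, closes the argument.
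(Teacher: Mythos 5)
Your proof is correct and follows essentially the same route as the paper: translate so $w(x_0)=0$, bound $M(r_0,r_0)$ via Corollary \ref{BoundednessInBalls}, feed the $\liminf$ hypothesis into Theorem \ref{Theorem2.11} using $\int_{r_0}^1\exp(\cdots)\,dr/r\le|\log r_0|$, and close by taking $r_0$ a power of $\rho$ to turn the exponential factor into a genuine polynomial decay. The only cosmetic difference is that you fix the coupling $r_0=\rho^{1/m}$ up front and apply Theorem \ref{Theorem2.11} explicitly to both $u$ and $-u$, whereas the paper reduces to the positive part and chooses $r_0=\rho^{k'}$ at the end.
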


\begin{proof} We notice that $-u$ is a quasiminimizer (see Remark \ref{Proposition 3.3KS}). Furthermore,  by hypothesis $-u-(-w) \in N^{1,1}_{0}(\Omega)$, so it is enough to work with the positive part $(u(x)-w(x_{0}))_{+}$. Without loss of generality, we can make $w(x_{0})=0$. Being $w$ continuous at $x_{0}$, Corollary \ref{BoundednessInBalls} yields to
	\begin{equation}\label{eq1theo2.12}
		M(r_{0},r_{0})\leq M=\esssup_{B(x_{0},R)}u_{+}<+\infty,
	\end{equation} for some $R>0$ and all $0<r_{0}<R$.
	
	\noindent For $0<\rho<r_{0}$, using Theorem \ref{Theorem2.11} we obtain 
	\begin{align}\label{eq2theo2.12}
		\nonumber \esssup_{B(x_{0},\rho)}u_{+}&\leq \esssup_{B(x_{0},r_{0})}w_{+}+M(\rho,r_{0})\\
		&\leq \esssup_{B(x_{0},r_{0})}w_{+}+C_{1}M\exp\left(-\frac{1}{4}\int_{\rho}^{r_{0}}\exp\left(-C_{0}\gamma_{p,q}(pd_2,r)^{\frac{1}{1-d_2}}\right)\frac{dr}{r}\right).
	\end{align} 
So there are $C, h,k>0$ satisfying $\esssup_{B(x_{0},r_{0})}w_{+}\leq Cr_{0}^{h}$
	and, for all sufficiently small $\rho$ and $r_{0}$,
	\begin{equation*}
		\int_{\rho}^{1}\exp\left(-C_{0}\gamma_{p,q}(pd_2,r)^{\frac{1}{1-d_2}}\right)\frac{dr}{r}\geq k\vert\log\rho\vert.
	\end{equation*}

	\noindent We observe that
	\begin{equation*}
		\int_{r_{0}}^{1}\exp\left(-C_{0}\gamma_{p,q}(pd_2,r)^{\frac{1}{1-d_2}}\right)\frac{dr}{r}\leq \int_{r_{0}}^{1}\frac{dr}{r}=\vert\log r_{0}\vert, 
	\end{equation*}for all $0<r_{0}<1$. For sufficiently small $\rho$ and $r_{0}$, using inequality (\ref{eq2theo2.12}), we derive 
	\begin{equation*}
		\esssup_{B(x_{0},\rho)}u_{+}\leq Cr_{0}^{h}+C_{1}M\rho^{\frac{k}{4}}r_{0}^{-\frac{1}{4}}.
	\end{equation*}

	\noindent The proof of the H\"older continuity of $u$ at $x_0$ is completed by choosing $r_{0}=\rho^{k '}$ with $0<k '<k$.
    \end{proof}

\section*{Acknowledgements}

\noindent A. Nastasi is a member of the \textit{Gruppo Nazionale per
l'Analisi Matematica, la Probabilit\`{a} e le loro Applicazioni} (GNAMPA) of
the Istituto Nazionale di Alta Matematica (INdAM) and was partly
supported by GNAMPA-INdAM Project  \textit{Regolarit\`{a} per problemi
ellittici e parabolici con crescite non standard}, CUP E53C22001930001.\\
\noindent This research was partly conducted while C. Pacchiano Camacho was at the
Okinawa Institute of Science and Technology (OIST) through the Theoretical
Sciences Visiting Program (TSVP).  C. Pacchiano Camacho was supported by a grant from Simons Foundation International SFI-MPS-T-Institutes-00011977 JS. \newline


\end{document}